\theoremstyle{plain}
   \newtheorem{theorem}{Theorem}[section]
   \newtheorem{proposition}[theorem]{Proposition}
   \newtheorem{lemma}[theorem]{Lemma}
   \newtheorem{corollary}[theorem]{Corollary}
   \newtheorem{conjecture}[theorem]{Conjecture}
\theoremstyle{definition}
\theoremstyle{remark}
 \newtheorem{remark}{Remark}[section]
\newcommand{\xx}{\mathbf{x}}
\newcommand{\LL}{\mathcal{L}}
\newcommand{\yy}{\mathbf{y}}
\newcommand{\NN}{\mathbb{N}}
\newcommand{\EE}{\mathcal{E}}
\newcommand{\ZZ}{\mathbb{Z}}
\newcommand{\RR}{\mathbb{R}}
\newcommand{\CC}{\mathbb{C}}
\newcommand{\sym}{\mathfrak{S}}
\def\newop#1{\expandafter\def\csname #1\endcsname{\mathop{\rm
#1}\nolimits}}
\begin{document}
\title[]
{Lecture hall $P$-partitions}

\author{Petter Br\"and\'en}
\author{Madeleine Leander}
\address{Department of Mathematics, Royal Institute of Technology, SE-100 44 Stockholm,
Sweden}
\email{pbranden@kth.se}
\address{Department of Mathematics, Stockholm University, SE-106 91
  Stockholm, Sweden}
\email{madde@math.su.se}

\begin{abstract}
We introduce and study $s$-lecture hall $P$-partitions which is a generalization of $s$-lecture hall partitions to labeled (weighted) posets. We provide generating function identities for $s$-lecture hall $P$-partitions that generalize identities obtained by Savage and Schuster for $s$-lecture hall partitions, and by Stanley for $P$-partitions. We also prove that the corresponding $(P,s)$-Eulerian polynomials are real-rooted for certain pairs $(P,s)$, and speculate on unimodality properties of these polynomials. 
\end{abstract}

\maketitle
\thispagestyle{empty}

\section{Introduction}
Let $s = (s_1, \ldots , s_n)$ be a sequence of positive integers. An $s$-\emph{lecture hall partition} is
an integer sequence $\lambda =(\lambda_1,\ldots , \lambda_n)$ satisfying $0 \leq \lambda_1/s_1 \leq \cdots \leq \lambda_n/s_n$. These are generalizations of \emph{lecture hall partitions}, corresponding to the case when $s=(1,2,\ldots,n)$, first studied by Bousquet-M\'elou and Eriksson \cite{BME}. 
It has recently been made evident that $s$-lecture hall partitions serve as a rich model for various combinatorial structures with nice generating functions, see \cite{BeBr,BME, BME2, CoLeSa, Sav, SP2, SS, SaWi} and the references therein.

In this paper we generalize the concept of $s$-lecture hall partitions to labeled posets. This constitutes a generalization of Stanley's theory of $P$-partitions, see \cite[Ch. 3.15]{EC1}. In Section \ref{gener} we derive multivariate generating function identities for $s$-lecture hall $P$-partitions, and prove a reciprocity theorem (Theorem \ref{reci}). When $P$ is a naturally labeled chain or an anti-chain, the generating function identities obtained  produce results on $s$-lecture hall partitions and signed permutations, respectively (see Section \ref{appl}). We also introduce and  study a $(P,s)$-Eulerian polynomial. In Section \ref{SR} we prove that this polynomial is palindromic for sign-graded labeled posets with a specific choice of $s$. In Section \ref{roots} we prove that the $(P,s)$-Eulerian polynomial is real-rooted for certain choices of $(P,s)$, and we also speculate on unimodality properties satisfied by these polynomials.

\section{Lecture hall $P$-partitions}
In this paper a \emph{labeled poset}  is a partially ordered set on $[p]:=\{1,\ldots, p\}$ for some positive integer  $p$, i.e., $P=([p], \preceq)$, where $\preceq$ denotes the partial order. 
We will use the symbol $\leq$ to denote the usual total order on the integers. 
If $P$ is a labeled poset, then a $P$-partition\footnote{What we call $P$-partitions are called reverse $(P,\omega)$-partitions in \cite{EC1,EC2}. However the theory of  $(P,\omega)$-partitions  and reverse $(P,\omega)$-partitions are clearly equivalent.} is a map $f : [p] \rightarrow \RR$ such that 
\begin{enumerate}
\item if $x \prec y$, then $f(x) \leq f(y)$, and  
\item if $x \prec y$ and $x>y$, then $f(x)<f(y)$.
\end{enumerate} 
The theory of $P$-partitions was developed by Stanley in his thesis and has since then been used frequently in several different combinatorial settings, see \cite{EC1, EC2}. 

Let 
$$
O(P)= \{ f \in \RR^p: f \mbox{ is a $P$-partition and } 0 \leq f(x) \leq 1 \mbox{ for all } x \in [p]\}
$$ 
be the \emph{order polytope} associated to $P$. Note that if $P$ is naturally labeled, i.e., $x \prec y$ implies $x<y$, then $O(P)$ is a closed integral polytope. Otherwise $O(P)$ is the intersection of a finite number of open or closed half-spaces. Recall that the \emph{Ehrhart polynomial} of an integral polytope $\mathcal{P}$ in $\RR^p$ is defined for nonnegative integers $n$ as 
$$
i(\mathcal{P}, n) = |n\mathcal{P} \cap \ZZ^p|, 
$$
where $n\mathcal{P}= \{ n \xx : \xx \in \mathcal{P}\}$, see \cite[p.~497]{EC1}.  
For order polytopes we have the following relationship due to Stanley: 
$$
\sum_{n\geq 0} i(O(P), n)t^n = \frac {A_{P}(t)} {(1-t)^{p+1}}, 
$$
where $A_{P}(t)$ is the $P$-Eulerian polynomial, which is the generating polynomial of the descent statistic over the set of all linear extensions of $P$, see \cite[Ch.~3.15]{EC1}.

The purpose of this paper is to initiate the study of a  lecture hall generalization of $P$-partitions. Let $P$ be a labeled poset and let $s : [p] \rightarrow \ZZ_+:=\{1,2,3,\ldots\}$ be an arbitrary map. We define a  \emph{lecture hall} $(P,s)$-\emph{partition} to be a map $f : [p] \rightarrow \RR$ such that 
\begin{enumerate}
\item if $x \prec y$, then $f(x)/s(x) \leq f(y)/s(y)$, and  
\item if $x \prec y$ and $x>y$, then $f(x)/s(x)<f(y)/s(y)$.
\end{enumerate} 
Let 
$$
O(P,s)= \{ f \in \RR^p: f \mbox{ is a $(P,s)$-partition and } 0 \leq f(x)/s(x) \leq 1 \mbox{ for all } x \in [p]\} 
$$ 
be the \emph{lecture hall order polytope} associated to $(P,s)$. We also let 
$$
C(P,s)= \{ f \in \RR^p: f \mbox{ is a $(P,s)$-partition and } 0 \leq f(x)/s(x)  \mbox{ for all } x \in [p]\} 
$$ 
be the \emph{lecture hall order cone} associated to $(P,s)$. The $(P,s)$-\emph{Eulerian polynomial}, $A_{(P,s)}(t)$, is defined by 
$$
\sum_{n \geq 0} i(O(P,s),n)t^n = \frac {A_{(P,s)}(t)}{(1-t)^{p+1}}.
$$

\section{The main generating functions}\label{gener}
In this section we derive formulas for the main generating functions associated to lecture hall $(P,s)$-partitions. The outline follows Stanley's theory of $P$-partitions \cite[Ch.~3.15]{EC1}. We shall see in Section~\ref{appl} that the special cases when $P$ is naturally labeled chain or an anti-chain automatically produce results on lecture hall polytopes and signed permutations, respectively. 

Let $\sym_p$ denote the symmetric group on $[p]$. If $\pi = \pi_1\pi_2 \cdots \pi_p \in \sym_p$ is a permutation written in one-line notation, we let $P_\pi$ denote the labeled chain $\pi_1 \prec \pi_2 \prec \cdots \prec \pi_p$. If  $P=([p], \preceq)$ is a labeled poset, let $\LL(P)$ denote the set 
$$\LL(P):=\{\pi \in \sym_p : \mbox{if } \pi_i \preceq \pi_j, \mbox{ then } i\leq j, \mbox{ for all } i,j \in [p]\}, $$
of \emph{linear extensions} (or the \emph{Jordan-H\"older set}) of $P$. 
The following lemma is an immediate consequence of Stanley's decomposition of $P$-partitions \cite[Lemma 3.15.3]{EC1}. 
\begin{lemma}\label{decomp}
If $P$ is a labeled poset and $s : [p] \rightarrow \ZZ_+$, then 
$$
C(P,s)= \bigsqcup_{\pi \in \LL(P)}\!\!C(P_\pi,s), 
$$ 
where $\bigsqcup$ denotes disjoint union. 
\end{lemma}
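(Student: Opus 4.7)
The plan is to reduce the statement to Stanley's classical decomposition lemma \cite[Lemma 3.15.3]{EC1} via the rescaling map that sends $f \in \RR^p$ to $g \in \RR^p$ defined by $g(x) = f(x)/s(x)$. Since each $s(x)$ is a positive integer, this rescaling is a bijection $\phi : \RR^p \to \RR^p$, so it suffices to track how $\phi$ transports each of the sets in question.

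First I would observe that $f$ satisfies the two defining conditions of a lecture hall $(P,s)$-partition precisely when $\phi(f)$ satisfies the two defining conditions of an ordinary $P$-partition (in the reverse-partition sense of Stanley), because both conditions on $f$ are literally conditions on the ratios $f(x)/s(x) = \phi(f)(x)$. Likewise, the non-negativity condition $f(x)/s(x) \geq 0$ defining $C(P,s)$ becomes the condition $\phi(f)(x) \geq 0$ defining the cone of non-negative $P$-partitions. Thus $\phi$ restricts to a bijection between $C(P,s)$ and the cone $C(P)$ of non-negative real $P$-partitions. Applying the same reasoning to the labeled chain $P_\pi$ (whose underlying set is still $[p]$ with the same $s$), one gets that $\phi$ restricts to a bijection between $C(P_\pi,s)$ and the cone $C(P_\pi)$ of non-negative $P_\pi$-partitions.

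Next I would invoke Stanley's decomposition, which states precisely that
$$
C(P) = \bigsqcup_{\pi \in \LL(P)} C(P_\pi),
$$
and then pull this decomposition back through $\phi^{-1}$ to obtain the desired equality $C(P,s) = \bigsqcup_{\pi \in \LL(P)} C(P_\pi,s)$. Disjointness survives under $\phi^{-1}$ because $\phi$ is a bijection, and membership survives because $\phi$ preserves each of the defining inequalities.

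The only real step that needs care is verifying that the strict-inequality clause of the $(P,s)$-partition definition truly corresponds to the strict-inequality clause of an ordinary $P$-partition under $\phi$; this is an immediate check because dividing each side of the strict inequality $f(x) < f(y)$ (in Stanley's setting) by positive weights is compatible with dividing by $s(x)$ and $s(y)$ individually. There is no substantive obstacle here — as the authors remark, the result is immediate once one sets up the rescaling bijection and quotes Stanley's lemma.
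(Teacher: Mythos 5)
Your proposal is correct and matches the paper's intent exactly: the paper simply declares the lemma an immediate consequence of Stanley's decomposition \cite[Lemma 3.15.3]{EC1}, and your rescaling bijection $f \mapsto (f(x)/s(x))_{x\in[p]}$ is precisely the (omitted) reason why that consequence is immediate. No gaps.
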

Let  $s : [p] \rightarrow \ZZ_+$. 
An $s$-\emph{colored permutation} is a pair $\tau=(\pi, r)$ where $\pi \in \sym_p$, and $r : [p] \rightarrow \NN$ satisfies $r(\pi_i)\in \{0,1,\ldots, s(\pi_i)-1\}$ for all $1\leq i \leq p$. 
If $P=([p], \preceq)$ is a labeled poset, let 
$$
\LL(P,s)= \{ \tau: \tau= (\pi, r) \mbox{ where } \pi \in \LL(P) \mbox{ and } \tau \mbox{ is  an $s$-colored permutation}\}.
$$
For $f : [p] \rightarrow \NN$, let $q(f), r(f) : [p] \rightarrow \NN$ be the unique functions satisfying 
$$
f(x)= q(f)(x)\cdot s(x)+ r(f)(x), \ \ \mbox{ where } q(f)(x) \in \NN \mbox{ and } 0 \leq r(f)(x)<s(x), 
$$
for all $x \in [p]$. Let further
$$
F_{(P,s)}(\xx,\yy) = \sum_{f \in \NN(P,s)} \yy^{r(f)} \xx^{q(f)}, 
$$
where $\xx^r=x_1^{r(1)}x_2^{r(2)}\cdots x_p^{r(p)}$ and $\NN(P,s)= C(P,s)\cap \NN^p$. 
 We say that $i \in [p-1]$ is a \emph{descent} of $\tau=(\pi,r)$ if 
$$
\begin{cases}
\pi_i < \pi_{i+1} \mbox{ and } r(\pi_i)/s(\pi_i) > r(\pi_{i+1})/s(\pi_{i+1}), \mbox{ or,} \\
\pi_i > \pi_{i+1} \mbox{ and } r(\pi_i)/s(\pi_i) \geq r(\pi_{i+1})/s(\pi_{i+1}), 
\end{cases}
$$
Let 
$$
D_1(\tau)= \{i \in [p-1]: i \mbox{ is a descent}\}. 
$$
\begin{theorem}\label{maingen}
If $P$ is a labeled poset and $s : [p] \rightarrow \ZZ_+$, then 
\begin{eqnarray}
\label{xy}
F_{(P,s)}(\xx,\yy) = \sum_{\tau=(\pi,r) \in \LL(P,s)} \yy^r\frac {\displaystyle \prod_{i\in D_1(\tau)}  x_{\pi_{i+1}} \cdots x_{\pi_{p}}}{\displaystyle \prod_{i\in [p]}(1-x_{\pi_i}\cdots x_{\pi_p})}.
\end{eqnarray}
\end{theorem}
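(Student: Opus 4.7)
The plan is to follow Stanley's strategy for the classical $P$-partition generating function, adapted to the lecture hall setting. By Lemma \ref{decomp}, the cone $C(P,s)$ decomposes as a disjoint union of the chain cones $C(P_\pi, s)$ over $\pi \in \LL(P)$, so $F_{(P,s)}(\xx,\yy) = \sum_{\pi \in \LL(P)} F_{(P_\pi,s)}(\xx,\yy)$. It therefore suffices to prove the formula when $P = P_\pi$ is a labeled chain, and to verify that summing over all $s$-colorings of the fixed linear order $\pi$ reproduces the summand indexed by $\pi$ on the right-hand side of \eqref{xy}.

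Fixing such a chain $P_\pi$ and a lattice point $f \in \NN(P_\pi,s)$, I write $f(\pi_i) = q_i\, s(\pi_i) + r_i$ with $0 \leq r_i < s(\pi_i)$, so that $f$ is determined uniquely by the residue vector $r$ (giving an $s$-coloring $\tau=(\pi,r)$) together with the quotient vector $q=(q_1,\ldots,q_p) \in \NN^p$. The crux of the argument is the following dictionary: for each $i \in [p-1]$, the appropriate lecture hall inequality between positions $i$ and $i+1$ is equivalent to requiring $q_{i+1} \geq q_i$, with strict inequality $q_{i+1} > q_i$ forced exactly when $i \in D_1(\tau)$. This is because $q_j + r_j/s(\pi_j)$ has integer part $q_j$ and fractional part in $[0,1)$, so $q_{i+1}>q_i$ automatically guarantees the desired inequality, while when $q_{i+1}=q_i$ the comparison collapses to the fractional parts $r_i/s(\pi_i)$ versus $r_{i+1}/s(\pi_{i+1})$; the definition of $D_1(\tau)$ has been arranged so that failure occurs precisely at the descents.

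Having established this dictionary, I perform the change of variables $d_0 = q_1$ and $d_i = q_{i+1} - q_i$ for $i \in [p-1]$, subject to $d_k \geq 0$ for all $k$ and $d_i \geq 1$ whenever $i \in D_1(\tau)$. A direct reindexing yields
$$\prod_{i=1}^p x_{\pi_i}^{q_i} = \prod_{k=0}^{p-1} \bigl(x_{\pi_{k+1}} x_{\pi_{k+2}} \cdots x_{\pi_p}\bigr)^{d_k},$$
so summing over all admissible $(d_0, \ldots, d_{p-1}) \in \NN^p$ factors the sum as a product of geometric series. Each factor indexed by $k \in D_1(\tau)$ starts at $d_k = 1$, contributing the numerator $x_{\pi_{k+1}} \cdots x_{\pi_p}$; the remaining factors, together with the factor coming from $d_0$, produce the full denominator $\prod_{i \in [p]}(1 - x_{\pi_i} \cdots x_{\pi_p})$. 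Multiplying by $\yy^r$ and summing over residue vectors $r$ compatible with $\pi$ (that is, over all $\tau \in \LL(P_\pi,s)$) yields exactly the right-hand side of \eqref{xy}.

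The only nontrivial step is the equivalence encoded in the dictionary of the second paragraph; everything else is a routine factoring of a multivariate geometric series, parallel in spirit to Stanley's proof in \cite[Ch.~3.15]{EC1}. The main obstacle is simply bookkeeping: making sure that the distinction between the $\leq$ and $<$ cases in the $(P,s)$-partition conditions corresponds correctly to the two cases in the definition of $D_1(\tau)$, so that the forced strict inequalities $d_i \geq 1$ are indexed by exactly the right set.
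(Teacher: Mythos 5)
Your proposal is correct and follows essentially the same route as the paper's proof: reduce to a labeled chain via Lemma \ref{decomp}, translate the lecture hall inequalities into the condition that the quotient sequence $q(\pi_1)\leq\cdots\leq q(\pi_p)$ is weakly increasing with strict increases exactly at $D_1(\tau)$, and then sum the resulting multivariate geometric series. Your telescoping substitution $d_i=q_{i+1}-q_i$ is just a slightly more explicit version of the paper's parametrization $q(\pi_k)=\alpha_k+|\{i\in D_1(\tau):i<k\}|$, so there is nothing substantive to distinguish the two arguments.
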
 

\begin{proof}
By Lemma \ref{decomp} we may assume that  $P= P_\pi$ is a labeled chain.  Let $f \in \NN^p$, and write 
$f(t)= q(t)s(t)+r(t)$, where $0 \leq r(t) <s(t)$ and $q(t)\in \NN$ for all $t \in [p]$. What conditions on $q$ and $r$ guarantee $f \in \NN(P,s)$? Suppose $\pi_i < \pi_{i+1}$. Then we need 
\begin{equation}\label{frac}
q(\pi_i) + \frac {r(\pi_i)}{s(\pi_i)} =\frac{f(\pi_i)}{s(\pi_i)} \leq \frac {f(\pi_{i+1})}{s(\pi_{i+1})}
= q(\pi_{i+1}) + \frac {r(\pi_{i+1})}{s(\pi_{i+1})}. 
\end{equation}
If $r(\pi_i)/s(\pi_i) \leq r(\pi_{i+1})/s(\pi_{i+1})$, then \eqref{frac} holds if and only if $q(\pi_i) \leq q(\pi_{i+1})$. If $r(\pi_i)/s(\pi_i) > r(\pi_{i+1})/s(\pi_{i+1})$, then \eqref{frac} holds if and only if $q(\pi_i) < q(\pi_{i+1})$.

Suppose $\pi_i > \pi_{i+1}$. Then we need 
\begin{equation}\label{frac2}
q(\pi_i) + \frac {r(\pi_i)}{s(\pi_i)} =\frac{f(\pi_i)}{s(\pi_i)} < \frac {f(\pi_{i+1})}{s(\pi_{i+1})}
= q(\pi_{i+1}) + \frac {r(\pi_{i+1})}{s(\pi_{i+1})}. 
\end{equation}
If $r(\pi_i)/s(\pi_i) < r(\pi_{i+1})/s(\pi_{i+1})$, then \eqref{frac2} holds if and only if $q(\pi_i) \leq q(\pi_{i+1})$. If $r(\pi_i)/s(\pi_i) \geq r(\pi_{i+1})/s(\pi_{i+1})$, then \eqref{frac2} holds if and only if $q(\pi_i) < q(\pi_{i+1})$. 

Let $\tau =(\pi, r)$, where $r$ is fixed. Then $f=qs+r \in \NN(P,s)$ with given (fixed) $r$ if and only if 
\begin{equation}\label{daqs}
0 \leq q(\pi_1) \leq q(\pi_2) \leq \cdots \leq q(\pi_p),
\end{equation}
where $q(\pi_i)<q(\pi_{i+1})$ if $i \in D_1(\tau)$. Hence $f=qs+r \in \NN(P,s)$ if and only if for each $k \in [p]$:
$$q(\pi_k)= \alpha_k + |\{i \in D_1(\tau): i<k\}|,$$ where $\alpha_k \in \NN$ and $0\leq \alpha_1 \leq \cdots \leq \alpha_p$. Hence 
\begin{align*}
\sum_q \prod_{i=1}^p x_{\pi_{i}}^{q(\pi_i)} &= \sum_{0\leq \alpha_1\leq \cdots \leq \alpha_p} x_{\pi_1}^{\alpha_1} \cdots  x_{\pi_p}^{\alpha_p}  \prod_{i\in D_1(\tau)}  x_{\pi_{i+1}} \cdots x_{\pi_{p}} \\
&=\frac {\displaystyle \prod_{i\in D_1(\tau)}  x_{\pi_{i+1}} \cdots x_{\pi_{p}}}{\displaystyle \prod_{i\in [p]}(1-x_{\pi_i}\cdots x_{\pi_p})}, 
\end{align*}
where the first sum is over all $q$ satisfying \eqref{daqs}. The theorem follows.
\end{proof}

Let $\ZZ_+(P,s)=C(P,s) \cap \ZZ_+^p$ and let 
$$
F_{(P,s)}^+(\xx,\yy) = \sum_{f \in \ZZ_+(P,s)} \yy^{r(f)} \xx^{q(f)}.
$$
Let further 
$$
D_2(\tau)=
\begin{cases}
D_1(\tau), &\mbox{ if } r(\pi_1) \neq 0, \\
D_1(\tau) \cup\{0\},  &\mbox{ if } r(\pi_1) = 0.
\end{cases}
$$
\begin{theorem}
If $P$ is a labeled poset and $s : [p] \rightarrow \ZZ_+$, then 
$$
F_{(P,s)}^+(\xx,\yy) = \sum_{\tau=(\pi,r) \in \LL(P,s)} \yy^r\frac {\displaystyle \prod_{i\in D_2(\tau)}  x_{\pi_{i+1}} \cdots x_{\pi_{p}}}{\displaystyle \prod_{i\in [p]}(1-x_{\pi_i}\cdots x_{\pi_p})}.
$$
\end{theorem}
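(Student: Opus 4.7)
The proof parallels Theorem \ref{maingen}. By Lemma \ref{decomp} we may assume $P = P_\pi$ is a labeled chain. Decomposing $f = q\cdot s + r$ and fixing $r$, the same case analysis as in the proof of Theorem \ref{maingen} shows that $f \in C(P_\pi, s)$ if and only if $0 \leq q(\pi_1) \leq \cdots \leq q(\pi_p)$ with strict inequality at every $i \in D_1(\tau)$. It remains only to see how the positivity condition $f(\pi_k) \geq 1$ for all $k$ further constrains $q$.

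Since $f(\pi_k) = q(\pi_k)s(\pi_k) + r(\pi_k)$, the requirement $f(\pi_k) \geq 1$ is active only when $r(\pi_k) = 0$, in which case it demands $q(\pi_k) \geq 1$. The key claim is that, together with the chain conditions, the whole collection of these constraints is equivalent to the single requirement ``$q(\pi_1) \geq 1$ whenever $r(\pi_1) = 0$''. One direction is immediate: if $r(\pi_1) = 0$ and $q(\pi_1) \geq 1$, then monotonicity of $q$ gives $q(\pi_k) \geq 1$ for all $k$. Conversely, suppose $r(\pi_1) \neq 0$ and $r(\pi_k) = 0$ for some $k \geq 2$; we must show that some $j \in [1, k-1]$ is a descent, as that will force $q(\pi_k) > q(\pi_j) \geq 0$ and hence $q(\pi_k) \geq 1$. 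If no such descent existed, then inspecting both cases $\pi_i < \pi_{i+1}$ and $\pi_i > \pi_{i+1}$ shows that the non-descent condition at position $i$ implies $r(\pi_i)/s(\pi_i) \leq r(\pi_{i+1})/s(\pi_{i+1})$. Telescoping from $i=1$ to $i=k-1$ then yields $r(\pi_1)/s(\pi_1) \leq r(\pi_k)/s(\pi_k) = 0$, contradicting $r(\pi_1) > 0$.

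With this reduction in hand, for fixed $\tau = (\pi, r) \in \LL(P, s)$ the allowed $q$'s are exactly those of the form $q(\pi_k) = \alpha_k + |\{i \in D_2(\tau) : i < k\}|$ with $0 \leq \alpha_1 \leq \cdots \leq \alpha_p$; adjoining $0$ to $D_2$ precisely when $r(\pi_1) = 0$ is the mechanism that encodes the extra lower bound $q(\pi_1) \geq 1$, since the term indexed by $i=0$ shifts $\alpha_1$ up by one. The same summation as in the proof of Theorem \ref{maingen} then produces the factor $\prod_{i \in D_2(\tau)} x_{\pi_{i+1}} \cdots x_{\pi_p}$ in the numerator over the denominator $\prod_{i \in [p]}(1 - x_{\pi_i} \cdots x_{\pi_p})$, and summing over $\tau \in \LL(P, s)$ gives the stated formula. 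The main obstacle is the claim in the second paragraph, that no ``hidden'' constraints arise at positions $k \geq 2$ with $r(\pi_k)=0$; the telescoping argument via the non-descent inequalities is what makes it work, and the rest is bookkeeping mirroring Theorem \ref{maingen}.
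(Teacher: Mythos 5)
Your proof is correct, but it takes a genuinely different route from the paper's. The paper deduces the theorem from Theorem \ref{maingen} by a poset construction: adjoin to $P$ a new minimum $\hat 0$ labeled $p+1$ with $s'(p+1)>\max_x s(x)$, observe that $f\in\NN(P',s')$ with $f(p+1)=1$ encodes exactly the positivity constraint $f(x)\geq 1$ on $[p]$, and read off that the new position $i=0$ is a descent precisely when $r(\pi_1)=0$; the formula with $D_2$ then falls out of \eqref{xy}. You instead redo the direct computation from the proof of Theorem \ref{maingen} with the extra constraint $f\in\ZZ_+^p$, and the heart of your argument is the combinatorial lemma that the constraints ``$q(\pi_k)\geq 1$ whenever $r(\pi_k)=0$'' collapse to the single constraint at $k=1$: your telescoping of the non-descent inequalities $r(\pi_i)/s(\pi_i)\leq r(\pi_{i+1})/s(\pi_{i+1})$ correctly shows that if $r(\pi_1)>0$ and $r(\pi_k)=0$ for some $k\geq 2$, then some $j<k$ lies in $D_1(\tau)$ and the forced strict increase already gives $q(\pi_k)\geq 1$. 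This lemma is exactly what the paper's choice of a large $s'(p+1)$ hides inside the poset construction. The trade-off: your argument is self-contained and makes transparent why only position $1$ matters, while the paper's is shorter and exhibits a reusable device (adjoining extremal elements to encode boundary conditions) that it exploits again in the proof of Proposition \ref{eqs}.
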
 

\begin{proof}
Consider $(P', s')$ where $P'$ is obtained from $P$ by adjoining a least element $\hat{0}$ labeled $p+1$, and $s' : [p+1] \rightarrow \ZZ_+$ is such that $s'$ restricted to $[p]$ agrees with $s$. Let also $s'(p+1)> \max\{s(t) : t \in [p]\}$. Then  $f \in \NN(P',s')$ if and only if 
$f|_{[p]} \in  \NN(P,s)$ and 
$$
0\leq \frac {f(p+1)}{s'(p+1)} < \frac {f(x)}{s(x)}, \ \ \mbox{ for all }  x \in [p]. 
$$
Thus $F_{(P,s)}^+(\xx,\yy)$ is obtained from $F_{(P',s')}(\xx,\yy)$ when we restrict to all $f \in \NN(P',s')$ 
with $f(p+1)=1$, i.e., $q(p+1)=0$ and $r(p+1)=1$, and then shift the indices. Hence $i=0$ is a descent in 
$((p+1)\pi_1\pi_2 \cdots \pi_p, r)$ if and only if $r(\pi_1)=0$, and the proof follows. 
\end{proof}
For $f : [p] \rightarrow \ZZ_+$, let $q'(f), r'(f) : [p] \rightarrow \NN$ be the unique functions satisfying 
$$
f(x)= q'(f)(x)\cdot s(x)+ r'(f)(x), \ \ \mbox{ where } q'(f)(x) \in \NN \mbox{ and } 0 < r'(f)(x)\leq s(x), 
$$
for all $x \in [p]$. Let further
$$
G_{(P,s)}(\xx,\yy) = \sum_{f \in \ZZ_+(P,s)} \yy^{r'(f)} \xx^{q'(f)}.
$$
Let  $D_3(\tau)$ be the set of all $i \in [p-1]$ for which
\begin{align*}
&\pi_i < \pi_{i+1} \mbox{ and } (r(\pi_i)+1)/s(\pi_i) > (r(\pi_{i+1})+1)/s(\pi_{i+1}), \mbox{ or,} \\
&\pi_i > \pi_{i+1} \mbox{ and } (r(\pi_i)+1)/s(\pi_i) \geq (r(\pi_{i+1})+1)/s(\pi_{i+1}). 
\end{align*}

\begin{theorem}
If $P$ is a labeled poset and $s : [p] \rightarrow \ZZ_+$, then 
$$
G_{(P,s)}(\xx,\yy) = \sum_{\tau=(\pi,r) \in \LL(P,s)} \yy^{r+\mathbf{1}}\frac {\displaystyle \prod_{i\in D_3(\tau)}  x_{\pi_{i+1}} \cdots x_{\pi_{p}}}{\displaystyle \prod_{i\in [p]}(1-x_{\pi_i}\cdots x_{\pi_p})}, 
$$
where $\mathbf{1}=(1,1,\ldots,1)$ is the all ones vector.
\end{theorem}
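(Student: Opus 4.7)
The plan is to follow the blueprint of the proof of Theorem \ref{maingen}, modified to accommodate the new remainder convention $0<r'(x)\leq s(x)$ in place of $0\leq r(x)<s(x)$. By Lemma \ref{decomp} it suffices to treat the case $P=P_\pi$, a labeled chain. For any $f\in\ZZ_+^p$ the decomposition $f(x)=q'(x)s(x)+r'(x)$ with $q'(x)\in\NN$ and $0<r'(x)\leq s(x)$ is unique, and conversely every such pair $(q',r')$ yields an $f\in\ZZ_+^p$. Writing $r'=r+\mathbf{1}$ with $r(x)\in\{0,\ldots,s(x)-1\}$ places $(\pi,r)$ in $\LL(P,s)$, and $\yy^{r'}=\yy^{r+\mathbf{1}}$.

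The next step is to determine exactly when $f=q's+(r+\mathbf{1})$ is a $(P_\pi,s)$-partition. For each consecutive pair $\pi_i,\pi_{i+1}$ in the chain, the required comparison of $f(\pi_i)/s(\pi_i)$ and $f(\pi_{i+1})/s(\pi_{i+1})$ becomes
$$q'(\pi_i)+\frac{r(\pi_i)+1}{s(\pi_i)}\;\bigl(\leq\text{ or }<\bigr)\;q'(\pi_{i+1})+\frac{r(\pi_{i+1})+1}{s(\pi_{i+1})},$$
with $\leq$ when $\pi_i<\pi_{i+1}$ and $<$ when $\pi_i>\pi_{i+1}$. Repeating the four-way case analysis from the proof of Theorem \ref{maingen} verbatim, but with the fractions $r(\pi_j)/s(\pi_j)$ replaced by $(r(\pi_j)+1)/s(\pi_j)$, shows that this is equivalent to $q'(\pi_i)\leq q'(\pi_{i+1})$ when $i\notin D_3(\tau)$ and $q'(\pi_i)<q'(\pi_{i+1})$ when $i\in D_3(\tau)$. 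This is the key (and essentially only non-routine) step; the set $D_3(\tau)$ has been defined precisely so as to record the forced-strict positions under the shifted remainder.

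With the descent characterization in hand, the rest of the argument is identical to the end of the proof of Theorem \ref{maingen}: the valid $q'\in\NN^p$ are parametrized by $q'(\pi_k)=\alpha_k+|\{i\in D_3(\tau):i<k\}|$ with $0\leq\alpha_1\leq\cdots\leq\alpha_p$, and summing the resulting geometric series produces the factor
$$\frac{\prod_{i\in D_3(\tau)}x_{\pi_{i+1}}\cdots x_{\pi_p}}{\prod_{i\in[p]}(1-x_{\pi_i}\cdots x_{\pi_p})}.$$
Multiplying by $\yy^{r+\mathbf{1}}$ and summing over $\tau=(\pi,r)\in\LL(P,s)$ yields the claimed formula. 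No new technique is needed beyond that of Theorem \ref{maingen}; the only subtlety is tracking the shift $r\mapsto r+1$ through the case analysis that defines $D_3$.
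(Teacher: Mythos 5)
Your proposal is correct and matches the paper's intent exactly: the paper simply states that the proof is almost identical to that of Theorem \ref{maingen} and omits it, and you have carried out precisely that adaptation, correctly tracking the shifted remainder convention $0<r'\leq s$ (so the fractional parts lie in $(0,1]$ rather than $[0,1)$) through the case analysis defining $D_3$.
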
 
\begin{proof}
The proof is almost identical to that of Theorem \ref{maingen}, and is therefore omitted. 
\end{proof}

For $n \in \NN$, let $$\NN_{\leq n}(P,s)=\{f \in \NN(P,s): f(x)/s(x) \leq n \mbox{ for all } x \in [p]\},$$ and let
$$
F_{(P,s)}(\xx,\yy;n) = \sum_{f \in \NN_{\leq n}(P,s)} \yy^{r(f)} \xx^{q(f)}.
$$
The polynomials $F_{(P,s)}^+(\xx,\yy;n)$ and $G_{(P,s)}(\xx,\yy;n)$ are defined analogously over $\{f \in \ZZ_+(P,s): f(x)/s(x) \leq n \mbox{ for all } x \in [p]\}$. Let also $$\NN_{<n}(P,s)=\{f \in \NN(P,s): f(x)/s(x) < n \mbox{ for all } x \in [p]\},$$ and 
$$
F'_{(P,s)}(\xx,\yy;n) = \sum_{f \in \NN_{<n}(P,s)} \yy^{r(f)} \xx^{q(f)}.
$$
For $\tau=(\pi,r) \in \LL(P,s)$, define  
$$
D(\tau)=
\begin{cases}
D_1(\tau), &\mbox{ if } r(\pi_p) = 0, \\
D_1(\tau) \cup\{p\},  &\mbox{ if } r(\pi_p) > 0, 
\end{cases}
$$
and 
$$
D_4(\tau)=
\begin{cases}
D_2(\tau), &\mbox{ if } r(\pi_p) = 0, \\
D_2(\tau) \cup\{p\},  &\mbox{ if } r(\pi_p) > 0.
\end{cases}
$$

\begin{proposition}\label{eqs}
If $P$ is a labeled poset and $s : [p] \rightarrow \ZZ_+$, then 
\begin{align}
\sum_{n \geq 0} F_{(P,s)}(\xx,\yy;n)t^n &=  \sum_{\tau=(\pi,r) \in \LL(P,s)} \yy^r \frac{\displaystyle \prod_{i\in D(\tau)}  x_{\pi_{i+1}} \cdots x_{\pi_{p}}}{\displaystyle \prod_{i\in [p]}(1-x_{\pi_i}\cdots x_{\pi_p}t)}\frac {t^{|D(\tau)|}} {1-t} \label{r1},\\
\sum_{n \geq 0} F'_{(P,s)}(\xx,\yy;n)t^n &=  \sum_{\tau=(\pi,r) \in \LL(P,s)} \yy^r \frac{\displaystyle \prod_{i\in D_1(\tau)}  x_{\pi_{i+1}} \cdots x_{\pi_{p}}}{\displaystyle \prod_{i\in [p]}(1-x_{\pi_i}\cdots x_{\pi_p}t)}\frac {t^{|D_1(\tau)|+1}} {1-t} \label{r2},\\
\sum_{n \geq 0} F_{(P,s)}^+(\xx,\yy;n)t^n &=  \sum_{\tau=(\pi,r) \in \LL(P,s)} \yy^r \frac{\displaystyle \prod_{i\in D_4(\tau)}  x_{\pi_{i+1}} \cdots x_{\pi_{p}}}{\displaystyle \prod_{i\in [p]}(1-x_{\pi_i}\cdots x_{\pi_p}t)}\frac {t^{|D_4(\tau)|}} {1-t} \label{r3}, \\
\sum_{n \geq 0} G_{(P,s)}(\xx,\yy;n)t^n &=  \sum_{\tau=(\pi,r) \in \LL(P,s)} \yy^{r+\mathbf{1}} \frac{\displaystyle \prod_{i\in D_3(\tau)}  x_{\pi_{i+1}} \cdots x_{\pi_{p}}}{\displaystyle \prod_{i\in [p]}(1-x_{\pi_i}\cdots x_{\pi_p}t)}\frac {t^{|D_3(\tau)|+1}} {1-t} \label{r4}. 
\end{align}
\end{proposition}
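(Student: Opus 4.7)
The plan is to mimic the proof of Theorem~\ref{maingen}. By Lemma~\ref{decomp} I may assume $P = P_\pi$ is a labeled chain, in which case the sum over $\LL(P,s)$ reduces to fixing $\pi$ and letting $r$ range. Recall that, for such a chain, the admissible $q$-parts take the form $q(\pi_k) = \alpha_k + d_k$ where $d_k = |\{i \in D_1(\tau) : i < k\}|$ and $0 \leq \alpha_1 \leq \cdots \leq \alpha_p$. Since $f(\pi_1)/s(\pi_1) \leq \cdots \leq f(\pi_p)/s(\pi_p)$ holds along the chain, the truncation $f(x)/s(x) \leq n$ reduces to a single inequality at $\pi_p$, and likewise the positivity condition $f \in \ZZ_+^p$ reduces to a single inequality at $\pi_1$.

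For \eqref{r1}, the inequality $q(\pi_p) + r(\pi_p)/s(\pi_p) \leq n$ forces $\alpha_p \leq n - |D_1(\tau)|$ when $r(\pi_p) = 0$ and $\alpha_p \leq n - 1 - |D_1(\tau)|$ when $r(\pi_p) > 0$; in either case the bound is $\alpha_p \leq n - |D(\tau)|$. Setting $m = n - |D(\tau)|$, the double sum
$$
\sum_{n \geq 0} t^n \sum_{0 \leq \alpha_1 \leq \cdots \leq \alpha_p \leq n - |D(\tau)|} \prod_{k=1}^p x_{\pi_k}^{\alpha_k}
$$
evaluates via the change of variables $\beta_i = \alpha_i - \alpha_{i-1}$ (with $\alpha_0 = 0$) and $\gamma = m - \alpha_p$, which turn it into a product of $p+1$ independent geometric series and yield $t^{|D(\tau)|}(1-t)^{-1} \prod_{i\in[p]} (1 - x_{\pi_i} \cdots x_{\pi_p} t)^{-1}$. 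Multiplying by $\yy^r$ and by $\prod_{i \in D_1(\tau)} x_{\pi_{i+1}} \cdots x_{\pi_p}$, which equals $\prod_{i \in D(\tau)} x_{\pi_{i+1}} \cdots x_{\pi_p}$ because the $i = p$ factor is an empty product, gives \eqref{r1}.

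Identity \eqref{r2} differs only in that the strict inequality $f(x)/s(x) < n$ forces $\alpha_p \leq n - 1 - |D_1(\tau)|$ regardless of $r(\pi_p)$, producing the exponent $|D_1(\tau)| + 1$. Identity \eqref{r3} further demands $f(\pi_1) \geq 1$, equivalently $\alpha_1 \geq [r(\pi_1) = 0]$ in Iverson notation; shifting $\alpha_k \mapsto \alpha_k - [r(\pi_1) = 0]$ contributes the extra factor $(x_{\pi_1} \cdots x_{\pi_p})^{[r(\pi_1) = 0]}$, which is precisely the $i = 0$ numerator term. The combined bookkeeping matches $|D_4(\tau)| = |D_1(\tau)| + [r(\pi_1) = 0] + [r(\pi_p) > 0]$ in the $t$-exponent and the product over $D_4(\tau)$ in the numerator.

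Identity \eqref{r4} is the $G_{(P,s)}$ analogue: parametrize $f = q's + r'$ with $r'(x) \in \{1, \ldots, s(x)\}$ and rerun the argument with $r'$ in place of $r$. The descent set of $(\pi, r')$ is $D_3(\tau)$ by its very definition, and since $r'(\pi_p)/s(\pi_p) > 0$ always, the upper bound yields $\alpha_p \leq n - 1 - |D_3(\tau)|$, accounting for the $+1$ in the $t$-exponent; the weight $\yy^{r + \mathbf{1}}$ records $r'$. The main obstacle throughout is the bookkeeping of boundary corrections: correctly pairing the possible addition of $0$ or $p$ to the descent set with the lower-bound shift on $\alpha_1$ and the upper-bound truncation on $\alpha_p$, and checking that the resulting $t$-exponents and numerator monomials line up with $D(\tau)$, $D_1(\tau)$, $D_3(\tau)$, and $D_4(\tau)$ respectively.
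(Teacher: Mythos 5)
Your proof is correct, but it takes a different route from the paper. The paper does not redo the geometric-series computation: instead, for each of \eqref{r1}--\eqref{r4} it adjoins a greatest element $\hat 1$ to $P$ (labeled $p+1$ with $s'(p+1)=1$ for \eqref{r1}, \eqref{r3}, \eqref{r4}, and labeled $0$ for \eqref{r2}), sets $x_{p+1}=t$, and reads the identity off from Theorem \ref{maingen} and its two variants applied to $(P',s')$; the passage from $D_1$ to $D$ (resp.\ to $D_4$, or the shift $|D_3|+1$) then falls out of whether $i=p$ is a descent of the extended colored permutation, i.e.\ whether $r(\pi_p)/s(\pi_p)>0$. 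You instead reduce to a chain via Lemma \ref{decomp} and recompute the sum over admissible $q$-parts with the truncation $\alpha_p\le n-|D(\tau)|$ (and the lower-bound shift at $\pi_1$ for \eqref{r3}) built in, splitting off the extra geometric variable $\gamma=m-\alpha_p$ to produce the $(1-t)^{-1}$ factor. Both arguments are sound; the paper's is shorter and more uniform because it recycles the already-proved unbounded generating functions, while yours is self-contained and makes the origin of each boundary correction ($[r(\pi_p)>0]$ in the upper truncation, $[r(\pi_1)=0]$ in the positivity constraint, and the ever-present $+1$ for $G$ since $r'(\pi_p)/s(\pi_p)>0$) completely explicit. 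Your identifications $\prod_{i\in D_1(\tau)}=\prod_{i\in D(\tau)}$ via the empty $i=p$ factor, and $|D_4(\tau)|=|D_1(\tau)|+[r(\pi_1)=0]+[r(\pi_p)>0]$, are exactly right, as is the observation that for $G$ no lower-bound correction is needed because $r'\ge 1$ already forces $f\ge 1$.
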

\begin{proof}
For \eqref{r1} consider $(P', s')$ where $P'$ is obtained from $P$ by adjoining a greatest element $\hat{1}$ labeled $p+1$, and $s' : [p+1] \rightarrow \ZZ_+$ restricted to $[p]$ agrees with $s$, while $s'(p+1)=1$. If we set $x_{p+1}=t$, then 
$$
\sum_{n \geq 0} F_{(P,s)}(\xx,\yy;n)t^n = F_{(P',s')},
$$
and 
$$
\LL(P',s')= \{(\pi_1\cdots \pi_p (p+1), r') : (\pi_1\cdots \pi_p, r'|_P)\in \LL(P,s) \mbox{ and } r'(p+1)=0\}.
$$
The identity \eqref{r1} follows by noting that $i=p$ is a descent of $(\pi_1\cdots \pi_p (p+1), r')$ if and only if $r(\pi_p)/s(\pi_p) > r'(p+1)/s'(p+1)=0$.

The other identities follows similarly. 
For example \eqref{r2} follows by considering $(P', s')$ where $P'$ is obtained from $P$ by adjoining a greatest element $\hat{1}$ labeled $0$ (and then relabel so that $P'$ has ground set $[p+1]$). 
For \eqref{r4} consider again $(P', s')$, where $P'$ is obtained from $P$ by adjoining a greatest element $\hat{1}$ labeled $p+1$, and $s'$ is defined as for the case of \eqref{r1}. Note that since $r'(p+1)=1$ we have $q'(p+1)=n-1$ if $f(p+1)=n$. This explains the shift by one in the exponent on the right hand side of \eqref{r4}, i.e., $|D_3(\tau)|+1$. 
\end{proof}

If $q$ is a variable, let $[0]_q:=0$ and $[n]_q:= 1+q+q^2+\cdots + q^{n-1}$ for $n \geq 1$. 
For the special case of \eqref{r1} when $P$ is an anti-chain we acquire the following corollary, which is a generalization of \cite[Theorem 5.23]{BeckBraun}. 
\begin{corollary} 
\label{6}
If $P$ is an anti-chain and $s : [p] \rightarrow \ZZ_+$, then 
\begin{eqnarray*}
 \sum_{n \geq 0} \prod_{i=1}^p   \left(  x_i^n + [n]_{x_i}[s(i)]_{y_i}     \right)t^n =\sum_{\tau=(\pi,r) \in \LL(P,s)} \yy^r \frac{\displaystyle \prod_{i\in D(\tau)}  x_{\pi_{i+1}} \cdots x_{\pi_{p}}}{\displaystyle \prod_{i\in [p]}(1-x_{\pi_i}\cdots x_{\pi_p}t)}\frac {t^{|D(\tau)|}} {1-t}
\end{eqnarray*}
\end{corollary}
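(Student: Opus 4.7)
The plan is to derive the corollary directly from equation \eqref{r1} of Proposition \ref{eqs} by showing that, when $P$ is an anti-chain, the series $\sum_{n\geq 0} F_{(P,s)}(\xx,\yy;n)\,t^n$ factors into the product displayed on the left-hand side. Since the right-hand side of the corollary is literally the right-hand side of \eqref{r1}, the whole task reduces to verifying the identity
\[
F_{(P,s)}(\xx,\yy;n) \;=\; \prod_{i=1}^p\left(x_i^{n} + [n]_{x_i}\,[s(i)]_{y_i}\right)
\]
for every nonnegative integer $n$.

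First, I would observe that when $P$ is an anti-chain the relation $\prec$ is empty, so the two defining conditions of a $(P,s)$-partition are vacuously satisfied. Consequently
\[
\NN_{\leq n}(P,s) \;=\; \bigl\{ f : [p] \to \NN \,:\, 0 \leq f(i) \leq n\,s(i) \text{ for all } i \in [p]\bigr\},
\]
and this set is a direct product over $i$. Since the weight $\yy^{r(f)}\xx^{q(f)}$ factors coordinatewise, $F_{(P,s)}(\xx,\yy;n)$ factors as a product of single-variable sums, one for each $i$.

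Next I would evaluate the $i$-th factor. Writing $f(i) = q\,s(i) + r$ with $q \in \NN$ and $0 \leq r < s(i)$, the constraint $f(i)/s(i) \leq n$ forces either $0 \leq q \leq n-1$ (with $r$ unrestricted among $\{0,\ldots,s(i)-1\}$) or $q = n$ together with $r = 0$. Hence the $i$-th factor equals
\[
\sum_{q=0}^{n-1}\sum_{r=0}^{s(i)-1} x_i^{q}\, y_i^{r} \;+\; x_i^{n}
\;=\; [n]_{x_i}\,[s(i)]_{y_i} + x_i^{n},
\]
which is exactly the bracketed expression in the corollary. Multiplying over $i$ and then summing against $t^n$ produces the claimed left-hand side.

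Finally, I would invoke \eqref{r1} of Proposition \ref{eqs} applied to the anti-chain $P$ to identify $\sum_{n\geq 0} F_{(P,s)}(\xx,\yy;n)t^n$ with the right-hand side of the corollary, completing the proof. The only place requiring any care is the boundary case $q = n$ in the local computation, where one must not double-count configurations with $r = 0$; apart from that, the argument is a direct reduction to the already-proved proposition.
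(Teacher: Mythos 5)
Your proposal is correct and follows the same route as the paper: factor $F_{(P,s)}(\xx,\yy;n)$ coordinatewise over the anti-chain, evaluate each factor via the case split between $0\leq q(i)\leq n-1$ with arbitrary $r(i)$ and the boundary case $q(i)=n$, $r(i)=0$, and then apply \eqref{r1}. No gaps.
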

\begin{proof}
 Let $P$ be an anti-chain and let $s:[p]\rightarrow \ZZ_+ $. Consider $f \in \NN_{\leq n}(P,s)$. Since $P$ is an anti-chain, $f(i)$ and $f(j)$ are independent for all $1\leq i < j \leq p$, and the only restriction is $0 \leq f(i)\leq ns(i)$ for all $1\leq i \leq p$. 
We write $f(i)=s(i) q(i)+ r(i)$, where $0\leq r(i)< s(i)$. Then $f \in \NN_{\leq n}(P,s)$ if and only if either $q(i)=n$ and $r(i)=0$, or $0 \leq q(i) \leq n-1$ and $0 \leq r(i) \leq s(i)-1$. Hence 
\begin{eqnarray*}
\sum_{f \in \NN_{\leq n}(P,s)} \yy^{r(f)} \xx^{q(f)} &=& \prod_{i=1}^p \left( x_i^0 [s(i)]_{y_i}+ \cdots + x_i^{n-1}[s(i)]_{y_i} + x_i^n \right) \\
&=& \prod_{i=1}^p   \left(   x_i^n + [n]_{x_i}[s(i)]_{y_i}\right). 
\end{eqnarray*}
The corollary now follows from \eqref{r1}. 
\end{proof}

Note that the special case of \eqref{r1} when $P$ is a naturally labeled chain gives an analogue (by an appropriate change of variables)  to one of the main results in \cite{SS}, see Theorem 5 therein. 
From \eqref{r1} we also get an interpretation of the Eulerian polynomial $A_{(P,s)}(t)$. For $\tau \in \LL(P,s)$, let $\des_s(\tau)= |D(\tau)|.$
\begin{corollary}\label{eul}
If $P$ is a labeled poset and $s : [p] \rightarrow \ZZ_+$, then 
$$
A_{(P,s)}(t) = \sum_{\tau \in \LL(P,s)} t^{\des_s(\tau)}. 
$$
\end{corollary}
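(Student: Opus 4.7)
The plan is to specialize identity \eqref{r1} of Proposition \ref{eqs} at $\xx = \mathbf{1}$ and $\yy = \mathbf{1}$. The key observation is that, by definition, $F_{(P,s)}(\xx,\yy;n)$ is a sum indexed by $\NN_{\leq n}(P,s)$, and setting all variables to one makes each summand equal to $1$. Hence
$$
F_{(P,s)}(\mathbf{1},\mathbf{1};n) = |\NN_{\leq n}(P,s)| = |nO(P,s) \cap \ZZ^p| = i(O(P,s),n),
$$
where in the middle equality one simply unwinds that $f \in \NN_{\leq n}(P,s)$ means $f \in \NN^p$ is a $(P,s)$-partition with $0 \le f(x)/s(x) \le n$ for all $x$, i.e.\ $f \in nO(P,s) \cap \NN^p$ (and $f \in \NN^p$ is equivalent to $f \in \ZZ^p$ here since all relevant coordinates are nonnegative).

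Next I would evaluate the right hand side of \eqref{r1} at $\xx = \mathbf{1}$, $\yy = \mathbf{1}$. Every monomial factor $\yy^r$ and $\prod_{i \in D(\tau)} x_{\pi_{i+1}} \cdots x_{\pi_p}$ collapses to $1$, while each of the $p$ factors $(1 - x_{\pi_i} \cdots x_{\pi_p} t)$ in the denominator becomes $(1-t)$. Combining with the extra factor $t^{|D(\tau)|}/(1-t)$, each summand contributes $t^{|D(\tau)|}/(1-t)^{p+1}$, so that \eqref{r1} specializes to
$$
\sum_{n \geq 0} i(O(P,s),n) t^n \;=\; \frac{1}{(1-t)^{p+1}} \sum_{\tau \in \LL(P,s)} t^{|D(\tau)|}.
$$

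Finally, comparing with the defining identity
$$
\sum_{n \geq 0} i(O(P,s),n) t^n = \frac{A_{(P,s)}(t)}{(1-t)^{p+1}}
$$
and recalling $\des_s(\tau) := |D(\tau)|$ yields the claim. There is no real obstacle here: everything follows by substitution once \eqref{r1} has been established, and the only point requiring a moment's care is recognizing $F_{(P,s)}(\mathbf{1},\mathbf{1};n)$ as the Ehrhart-type counting function of $O(P,s)$, which is immediate from the definitions.
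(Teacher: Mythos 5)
Your proof is correct and follows exactly the route the paper intends: the corollary is stated as an immediate consequence of \eqref{r1}, obtained by setting all $x$- and $y$-variables to $1$ and identifying $F_{(P,s)}(\mathbf{1},\mathbf{1};n)$ with $i(O(P,s),n)$. The only addition you make is spelling out that identification, which the paper leaves implicit.
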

The next corollary follows from Proposition \ref{eqs} by setting the $x$- and $y$-variables to $1$.
\begin{corollary}\label{eul2}
If $P$ is a labeled poset and $s : [p] \rightarrow \ZZ_+$, then 
$$
 \sum_{\tau \in \LL(P,s)} t^{|D_4(\tau)|} =  \sum_{\tau \in \LL(P,s)} t^{|D_3(\tau)|+1},
 $$
and if $s(x)=1$ for all minimal elements $x$ in $P$, then  
\begin{equation}\label{eul3}
A_{(P,s)}(t)= \sum_{\tau \in \LL(P,s)} t^{|D(\tau)|} =  \sum_{\tau \in \LL(P,s)} t^{|D_3(\tau)|}.
\end{equation}
 \end{corollary}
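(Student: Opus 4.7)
The plan is to read off both identities by specializing Proposition \ref{eqs} at $\xx=\yy=1$ and identifying pairs of left-hand sides that count the same sets. In each case the right-hand sides, which share the denominator $(1-t)^{p+1}$, must then agree numerator-wise.

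For the first identity, I would observe that $F^+_{(P,s)}(\xx,\yy;n)$ and $G_{(P,s)}(\xx,\yy;n)$ are sums over the same index set $\{f \in \ZZ_+(P,s): f(x)/s(x) \leq n\}$; they differ only in the choice of quotient--remainder representation (``$0\le r(f)<s$'' versus ``$0<r'(f)\le s$''), a distinction that collapses at $\xx=\yy=1$. Hence $F^+_{(P,s)}(1,1;n)=G_{(P,s)}(1,1;n)$. Substituting into \eqref{r3} and \eqref{r4} and equating numerators gives
$$
\sum_{\tau \in \LL(P,s)} t^{|D_4(\tau)|} = \sum_{\tau \in \LL(P,s)} t^{|D_3(\tau)|+1}.
$$

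For the second identity, the equality $A_{(P,s)}(t)=\sum_\tau t^{|D(\tau)|}$ is just Corollary \ref{eul}. To reach $\sum_\tau t^{|D(\tau)|}=\sum_\tau t^{|D_3(\tau)|}$, I plan to exploit the hypothesis by way of the shift map $\phi(f)=f-s$. The key point is that if $s(x)=1$ for every minimal $x$, then any $f \in \ZZ_+(P,s)$ satisfies $f(x)/s(x)\ge 1$ at minimal $x$, and propagating this inequality upward through the $(P,s)$-partition conditions forces $f(y)\ge s(y)$ for every $y\in[p]$. Consequently $\phi$ is a well-defined bijection
$$
\{f \in \ZZ_+(P,s): f(x)/s(x) \leq n\} \longleftrightarrow \NN_{\leq n-1}(P,s),
$$
that is, $G_{(P,s)}(1,1;n)=F_{(P,s)}(1,1;n-1)$ for $n\ge 1$ (with $G_{(P,s)}(1,1;0)=0$). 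Summing against $t^n$ and substituting the specializations of \eqref{r1} and \eqref{r4} produces $\sum_\tau t^{|D_3(\tau)|+1}=t\sum_\tau t^{|D(\tau)|}$, from which the claim follows after canceling $t$.

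The only substantive obstacle is pinning down where the hypothesis is used in the second half: without $s(x)=1$ at minimal elements one only has $f\ge 1$ coordinatewise, not $f\ge s$, so the shift $\phi$ can push $f-s$ outside $\NN^p$ and the identification of cardinalities breaks down. Everything else is routine bookkeeping with the specialized generating functions of Proposition \ref{eqs}.
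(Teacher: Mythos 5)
Your proof is correct. For the first identity you do exactly what the paper's one-line proof intends: $F^+_{(P,s)}(\mathbf{1},\mathbf{1};n)$ and $G_{(P,s)}(\mathbf{1},\mathbf{1};n)$ both count $\{f \in \ZZ_+(P,s): f(x)/s(x)\le n\}$, so equating \eqref{r3} and \eqref{r4} at $\xx=\yy=\mathbf{1}$ gives the numerator identity. For the second identity you take a genuinely different (but valid) route. The paper's intended mechanism is purely combinatorial on the descent sets: since $\pi_1$ is always a minimal element of $P$, the hypothesis forces $s(\pi_1)=1$, hence $r(\pi_1)=0$ for every $\tau\in\LL(P,s)$, hence $D_2(\tau)=D_1(\tau)\cup\{0\}$ and $|D_4(\tau)|=|D(\tau)|+1$; plugging this into the first identity gives $t\sum_\tau t^{|D(\tau)|}=t\sum_\tau t^{|D_3(\tau)|}$ at once. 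You instead build the lattice-point shift bijection $f\mapsto f-s$ between $\{f\in\ZZ_+(P,s): f/s\le n\}$ and $\NN_{\le n-1}(P,s)$, which is where your hypothesis enters (the propagation $f(x)/s(x)\ge 1$ from minimal elements upward is argued correctly, and the degenerate case $G_{(P,s)}(\mathbf{1},\mathbf{1};0)=0$ is handled). Your bijection is in the spirit of Theorem \ref{bij} and is slightly more work than the descent-set bookkeeping, but it has the advantage of explaining geometrically why the hypothesis is needed; the descent-set argument is shorter and stays entirely inside Proposition \ref{eqs}. Either way the conclusion follows.
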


Let $P=([p], \preceq)$ be a labeled poset. For $i \in [p]$, let $i^*=p+1-i$, and let $(P^*,s^*)$ be defined by $P^*=([p], \preceq^*)$ with 
$$
i \preceq j \mbox{ in } P\ \ \mbox{ if and only if } \ \ i^* \preceq^* j^* \mbox{ in } P^*, \ \ \mbox{ for all } i,j \in [p], 
$$
and $s^*(i^*)=s(i)$ for all $i \in [p]$. The poset $P^*$ is called the \emph{dual} of $P$.

\begin{theorem}[Reciprocity theorem]\label{reci}
If $P$ is a labeled poset and $s : [p] \rightarrow \ZZ_+$, then 
$$
G_{(P^*,s^*)}(\xx^*,\yy^*) = (-1)^p \frac {y_1^{s(1)}\cdots y_p^{s(p)}}{x_1\cdots x_p} F_{(P,s)}(\xx^{-1},\yy^{-1}),$$
where $\xx^*= (x_p,x_{p-1},\ldots, x_1)$ and $\xx^{-1}=(x_1^{-1},\ldots, x_p^{-1})$. 
\end{theorem}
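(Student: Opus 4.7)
The plan is to prove Theorem \ref{reci} by first reducing to the case of a single labeled chain via Lemma \ref{decomp}, and then matching the two sides term by term using the formula in Theorem \ref{maingen} and the analogous formula for $G_{(P,s)}$.

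\textbf{Reduction to chains.} By Lemma \ref{decomp}, $\NN(P,s)=\bigsqcup_{\pi\in\LL(P)}\NN(P_\pi,s)$ and $\ZZ_+(P^*,s^*)=\bigsqcup_{\sigma\in\LL(P^*)}\ZZ_+(P^*_\sigma,s^*)$, so $F_{(P,s)}=\sum_{\pi\in\LL(P)} F_{(P_\pi,s)}$ and $G_{(P^*,s^*)}=\sum_{\sigma\in\LL(P^*)} G_{(P_\sigma^*,s^*)}$. The map $\pi\mapsto\pi^*$ with $\pi^*_i:=p+1-\pi_i$ is a bijection $\LL(P)\to\LL(P^*)$ (immediate from $i\preceq j\iff i^*\preceq^* j^*$), and it identifies $(P_\pi)^*$ with $P_{\pi^*}^*$. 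Hence it is enough to verify the identity when $P=P_\pi$ is a labeled chain.

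\textbf{Coloring bijection and descent complementation.} For the chain $P_\pi$ the sets $\LL(P_\pi,s)$ and $\LL(P_\pi^*,s^*)$ are both parametrized by the $s$-colorings $r$, coupled by the involution
$$r^*(\pi^*_i):=s(\pi_i)-1-r(\pi_i),$$
well-defined because $s^*(\pi^*_i)=s(\pi_i)$. The combinatorial heart of the proof is the descent complementation
$$D_3(\tau^*)=[p-1]\setminus D_1(\tau),$$
which I intend to derive by observing that $\pi^*_i<\pi^*_{i+1}$ iff $\pi_i>\pi_{i+1}$ and that $(r^*(\pi^*_i)+1)/s^*(\pi^*_i)=1-r(\pi_i)/s(\pi_i)$, so each of the four cases defining $D_3$ is exactly the logical negation of a case defining $D_1$.

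\textbf{Algebraic matching.} One then applies $(1-z^{-1})^{-1}=-z/(1-z)$ to each of the $p$ denominator factors in $F_{(P_\pi,s)}(\xx^{-1},\yy^{-1})$: this produces the sign $(-1)^p$ that matches the prefactor, together with an extra product $\prod_i(x_{\pi_i}\cdots x_{\pi_p})$. For $i\in D_1(\tau)$ the new factor cancels the inverted numerator term $(x_{\pi_{i+1}}\cdots x_{\pi_p})^{-1}$ to leave $x_{\pi_i}$, whereas for $i\notin D_1(\tau)$ it leaves $(x_{\pi_i}\cdots x_{\pi_p})$. Dividing by $\prod_j x_j=\prod_i x_{\pi_i}$ and invoking the descent complementation collapses the numerator to $\prod_{i\in D_3(\tau^*)}(x_{\pi_{i+1}}\cdots x_{\pi_p})$; simultaneously $\prod_j y_j^{s(j)}\cdot\yy^{-r}=\prod_j y_j^{s(j)-r(j)}$ matches $(\yy^*)^{r^*+\mathbf{1}}$ under the identification $y^*_{\pi^*_i}=y_{\pi_i}$, and the denominators agree under $x^*_{\pi^*_i}=x_{\pi_i}$. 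Summing over all $s$-colorings $r$ gives the chain identity, and summing then over $\pi\in\LL(P)$ finishes the proof. The only delicate step is bookkeeping the descent complementation across all four cases; the subsequent rational-function manipulation is purely mechanical.
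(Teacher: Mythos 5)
Your proposal is correct and follows essentially the same route as the paper: the coloring involution $r^*(\pi_i^*)=s(\pi_i)-1-r(\pi_i)$, the descent complementation $D_3(\tau^*)=[p-1]\setminus D_1(\tau)$ (the paper states the equivalent $D_1(\tau)=[p-1]\setminus D_3(\tau^*)$), and the factor-by-factor substitution $(1-z^{-1})^{-1}=-z/(1-z)$ in the formula of Theorem~\ref{maingen}. The explicit reduction to chains via Lemma~\ref{decomp} is a cosmetic difference, since the paper's generating functions are already sums over $\LL(P,s)$.
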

\begin{proof}
For $\tau =(\pi,r) \in \LL(P, s)$, let $\tau^* = (\pi_1^*\pi_2^*\cdots \pi_p^*,r^*)$ where $r^*(i^*) = s(i)-1-r(i)$ for all $i \in [p]$. Clearly the map $\tau \mapsto \tau^*$ is a bijection between $\LL(P, s)$ and $\LL(P^*, s^*)$. Moreover if $i \in [p-1]$, then $i \in D_3(\tau)$ if and only if 
$$
\begin{cases}
\pi_i < \pi_{i+1} \mbox{ and } (r(\pi_i)+1)/s(\pi_i) > (r(\pi_{i+1})+1)/s(\pi_{i+1}), \mbox{ or,} \\
\pi_i > \pi_{i+1} \mbox{ and } (r(\pi_i)+1)/s(\pi_i) \geq (r(\pi_{i+1})+1)/s(\pi_{i+1}), 
\end{cases}
$$
 if and only if
 $$
\begin{cases}
\pi_i^* > \pi_{i+1}^* \mbox{ and } r^*(\pi_i^*)/s^*(\pi_i^*) < r^*(\pi_{i+1}^*)/s^*(\pi_{i+1}^*) , \mbox{ or,} \\
\pi_i^* < \pi_{i+1}^* \mbox{ and }  r^*(\pi_i^*)/s^*(\pi_i^*) \leq r^*(\pi_{i+1}^*)/s^*(\pi_{i+1}^*)
\end{cases}
$$
if and only if $i \in [p-1] \setminus D_1(\tau^*)$. Thus 
\begin{equation}\label{comp}
D_3(\tau)= [p-1] \setminus D_1(\tau^*) \ \ \mbox{ and } \ \  D_1(\tau)= [p-1] \setminus D_3(\tau^*), 
\end{equation}
for all $\tau \in \LL(P,s)$. 
Now 
\begin{align*}
F_{(P,s)}(\xx,\yy) &= \sum_{\tau \in \LL(P,s)} \yy^r\frac {\displaystyle \prod_{i\in D_1(\tau)}  x_{\pi_{i+1}} \cdots x_{\pi_{p}}}{\displaystyle \prod_{i\in [p]}(1-x_{\pi_i}\cdots x_{\pi_p})} = \sum_{\tau \in \LL(P,s)} \yy^r\frac {\displaystyle \prod_{i\in [p-1]\setminus D_3(\tau^*)}  x_{\pi_{i+1}} \cdots x_{\pi_{p}}}{\displaystyle \prod_{i\in [p]}(1-x_{\pi_i}\cdots x_{\pi_p})}  \\
&=\sum_{\tau \in \LL(P,s)} \frac{\yy^{s}(\yy^*)^{-(r^*+\mathbf{1})}}{x_1\cdots x_p}\frac {\displaystyle \prod_{i\in D_3(\tau^*)}  x_{\pi_{i+1}}^{-1} \cdots x_{\pi_{p}}^{-1}}{\displaystyle \prod_{i\in [p]}(1-x_{\pi_i}\cdots x_{\pi_p})} \displaystyle \prod_{i\in [p]}x_{\pi_i}\cdots x_{\pi_p} \\
&=(-1)^p\frac {y_1^{s(1)}\cdots y_p^{s(p)}}{x_1\cdots x_p} \sum_{\tau \in \LL(P,s)} (\yy^*)^{-(r^*+\mathbf{1})}\frac {\displaystyle \prod_{i\in D_3(\tau^*)}  x_{\pi_{i+1}}^{-1} \cdots x_{\pi_{p}}^{-1}}{\displaystyle \prod_{i\in [p]}(1-x_{\pi_i}^{-1}\cdots x_{\pi_p}^{-1})} \\
&= (-1)^p\frac {y_1^{s(1)}\cdots y_p^{s(p)}}{x_1\cdots x_p} G_{(P^*,s^*)}((\xx^*)^{-1},(\yy^*)^{-1}),
\end{align*}
from which the theorem follows. 
\end{proof}

\begin{remark}
Theorem \ref{reci} generalizes the reciprocity theorem in \cite{BME2} which follows as the special case when $P$ is a naturally labeled chain.
\end{remark}

\section{Sign-ranked posets}\label{SR}
Let $P = \{1\prec 2 \prec \cdots \prec p\}$ be a naturally labeled chain, and let $s(i)=i$ for all $i \in [p]$. Savage and Schuster \cite[Lemma 1]{SS} proved that $A_{(P,s)}(t)$ is equal to the Eulerian polynomial 
$$
A_p(t)= \sum_{\pi \in \sym_p}t^{\des(\pi)},
$$
where $\des(\pi)= |\{ i \in [p] : \pi_i > \pi_{i+1}\}$. Recall that a polynomial $g(t)$ is \emph{palindromic} if $t^Ng(1/t)=g(t)$ for some integer $N$. It is well known that $A_p(t)$ is palindromic (in fact $t^{p-1}A_p(1/t)= A_p(t)$). The same is known to be true for the $P$-Eulerian polynomial of any naturally labeled graded poset, see \cite[Corollary 3.15.18]{EC1}, and more generally  for $P$-Eulerian polynomials of so called sign-graded labeled posets \cite[Corollary 2.4]{sign}. We shall here generalize these results to $(P,s)$-Eulerian polynomials. 

Recall that a pair of elements elements $(x,y)$ taken from a labeled poset $P$ is a \emph{covering relation} if $x \prec y$ and $x\prec z \prec y$ for no $z \in P$. Let $\EE(P)$ denote the set of covering relations of $P$. If $P$ is a labeled poset define a function $\epsilon : \EE(P) \rightarrow \{-1,1\}$ by 
$$
\epsilon(x,y) = \begin{cases}
1, &\mbox{ if } x<y, \mbox{ and }\\
-1, &\mbox{ if } x>y.
\end{cases}
$$
Sign-graded (labeled) posets, introduced in \cite{sign}, generalize graded naturally labeled posets. A labeled poset $P$ is \emph{sign-graded} of \emph{rank} $r$, if 
$$
\sum_{i=1}^k \epsilon(x_{i-1}, x_i) = r
$$
for each maximal chain $x_0\prec x_1 \prec \cdots \prec x_k$ in $P$. A sign-graded poset is equipped with a well-defined \emph{rank-function}, $\rho : P \rightarrow \ZZ$, defined by 
$$
\rho(x) = \sum_{i=1}^k \epsilon(x_{i-1}, x_i), 
$$
where $x_0\prec x_1 \prec \cdots \prec x_k=x$ is any unrefinable chain, $x_0$ is a minimal element and $x_k=x$. Hence a naturally labeled poset is sign-graded if and only if it is graded. 
A labeled poset $P$ is \emph{sign-ranked} if for each maximal element $x \in P$, the subposet 
$\{y \in P: y\preceq x\}$ is sign-graded. Note that each sign-ranked poset has a well-defined rank function $\rho : P \rightarrow \ZZ$. Thus a naturally labeled poset is sign-ranked if and only if it is ranked. 
\begin{theorem}\label{bij}
Let $P$ be a sign-ranked labeled poset and suppose its rank function attains non-negative values only.  Let $s(x)= \rho(x)+1$ for each $x \in [p]$, and define 
$u : \NN(P,s) \rightarrow \ZZ^{p}$ by $u(f)(x^*)= f(x)+\rho(x)$. Then 
$u : \NN_{\leq n}(P,s) \rightarrow \NN_{<n+1}(P^*,s^*)$ is a bijection for each $n \in \NN$. 
\end{theorem}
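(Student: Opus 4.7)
The plan is to verify bijectivity by constructing an explicit inverse $v : \NN_{<n+1}(P^*,s^*) \to \ZZ^p$ via $v(g)(x) = g(x^*) - \rho(x)$, and then check that $u$ and $v$ really land in the prescribed sets and invert one another. The core of the argument is to show (i) the ``ambient'' range conditions correspond, (ii) the partition inequalities correspond, and (iii) the inverse $v$ produces non-negative values. I would handle (i) first by a direct integer computation: since $s^*(x^*) = s(x) = \rho(x)+1$, the condition $0 \leq f(x)/s(x) \leq n$ reads $0 \leq f(x) \leq ns(x)$, which rearranges (using that everything is an integer) to $\rho(x) \leq f(x)+\rho(x) < (n+1)s^*(x^*)$. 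So as soon as (iii) is in hand, the ambient range conditions match.

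For (ii), the plan is to reduce to covering relations. If the ($P,s$)-partition conditions hold on every cover $x \prec_c y$, then for an arbitrary $x \prec y$ one chains through a sequence of covers: weak covering inequalities chain to a weak one, and if $x > y$ then at least one cover in the chain must have its labels decreasing, producing the required strict inequality. So it suffices to verify the correspondence on covers. Given a cover $(x,y)$ in $P$, sign-gradedness yields $\rho(y) = \rho(x)+\epsilon(x,y)$; setting $k = \rho(x)$ I would split into the two cases. If $x<y$ (so $s(y) = k+2$), the $f$-condition is the weak inequality $(k+2)f(x) \leq (k+1)f(y)$, while the corresponding $P^*$-cover has $x^* > y^*$, requiring the strict inequality $(f(x)+k)(k+2) < (f(y)+k+1)(k+1)$; expanding shows this is $(k+2)f(x) - (k+1)f(y) < 1$, equivalent by integrality. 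If $x>y$ (so $s(y) = k$), a symmetric computation shows both the $f$-strict inequality and the $g$-weak inequality reduce to $kf(x) - (k+1)f(y) \leq -1$.

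Step (iii) is the point that actually uses the structure of $P$ most essentially, and it is where I expect the main obstacle to lie. The claim is that for every $g \in \NN_{<n+1}(P^*,s^*)$ one has $g(x^*) \geq \rho(x)$, so that $v(g)$ takes non-negative integer values. I would prove this by induction on $\rho(x)$. If $\rho(x) = 0$, then $x$ is a minimal element of $P$, $x^*$ is a minimal element of $P^*$, and $g(x^*) \geq 0 = \rho(x)$ is immediate. If $\rho(x) > 0$, pick any cover $(x_1,x)$ in $P$; sign-gradedness forces $\rho(x_1) = \rho(x) \pm 1$, so the inductive hypothesis gives $g(x_1^*) \geq \rho(x_1)$. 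Plugging this lower bound into the cover-inequality between $g(x_1^*)$ and $g(x^*)$ (strict in the $x_1 < x$ case, weak in the $x_1 > x$ case) and simplifying forces $g(x^*) \geq \rho(x)$ by integrality; the tight examples are $g(x^*) = \rho(x)$ in the ``up'' case and $g(x^*) = \rho(x)+1$ in the ``down'' case, so the bound is sharp. Once (iii) is established, the computations in (ii) already show $v(g)$ is a $(P,s)$-partition and $u\circ v = \text{id}$, $v \circ u = \text{id}$ are tautological from the definitions, completing the bijection.
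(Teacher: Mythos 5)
Your overall strategy is the same as the paper's: reduce the $(P,s)$-partition conditions to covering relations, settle each of the two cover cases by clearing denominators and invoking integrality, and invert via $g\mapsto g(\cdot^*)-\rho$. Steps (i) and (ii) are correct and are, up to rearrangement, exactly the paper's computations (the paper estimates $u(f)(x^*)/s^*(x^*)$ directly in the $\epsilon=+1$ case and uses the integrality of $(s(y)+1)f(y)-s(y)f(x)$ in the $\epsilon=-1$ case; your reformulation of each cover condition as a single integer inequality is equivalent). Your reduction to covers, including the observation that a relation $x\prec y$ with $x>y$ must pass through at least one label-decreasing cover, is also sound.

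The one genuine flaw is the induction in step (iii). First, $\rho(x)=0$ does not imply that $x$ is minimal: in the labeled chain $1\prec 3\prec 2$ one has $\rho(2)=0$ although $2$ is not minimal. (The base case survives anyway, since $g\geq 0$ holds by definition of $\NN(P^*,s^*)$.) More seriously, the inductive step does not go through as stated: for a cover $(x_1,x)$ with $x_1>x$ one has $\rho(x_1)=\rho(x)+1$, so an induction on $\rho$ gives no control over $g(x_1^*)$. For instance, in the labeled chain $1\prec 2\prec 4\prec 3$ the element $3$ has $\rho(3)=1$ while its unique lower cover $4$ has $\rho(4)=2$, so the rank-$1$ case would need the not-yet-established rank-$2$ case. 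The repair is to induct on the height of $x$ (the length of a saturated chain from a minimal element up to $x$) instead of on $\rho(x)$; your two cover-by-cover estimates then apply verbatim and are correct, including the sharp bound $g(x^*)\geq\rho(x)+1$ in the label-decreasing case. Equivalently --- and this is how the paper argues --- once (ii) shows that $v(g)$ satisfies the $(P,s)$-partition inequalities, the ratios $v(g)(x)/s(x)$ weakly increase along chains and equal $g(x^*)\geq 0$ at minimal elements (where $\rho=0$), so $v(g)\geq 0$ everywhere.
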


\begin{proof}
 We first prove  $u : \NN(P,s) \rightarrow \NN(P^*,s^*)$. Note that $f$ is a $(P,s)$-partition if and only if 
 \begin{enumerate}
 \item if $(x,y) \in \EE(P)$, then $f(x)/s(x) \leq f(y)/s(y)$, and 
 \item if $(x,y) \in \EE(P)$ and $\epsilon(x,y)=-1$, then $f(x)/s(x) < f(y)/s(y)$.
 \end{enumerate}
 Hence it suffices to consider covering relations when proving that $u : \NN(P,s) \rightarrow \NN(P^*,s^*)$. 

Let $f  \in \NN(P,s)$. 
Suppose $y$ covers $x$ and $\epsilon(x,y)=1$. Then $f(x)/s(x) \leq f(y)/s(y)$  and $s(x)<s(y)$, and thus 
$$
\frac {u(f)(x^*)}{s^*(x^*)}= \frac {f(x)+s(x)-1}{s(x)} \leq \frac {f(y)}{s(y)} + 1- \frac {1}{s(x)} < \frac {f(y)}{s(y)} + 1- \frac {1}{s(y)} = \frac {u(f)(y^*)}{s^*(y^*)},
$$
as desired. 

Suppose $y$ covers $x$ and $\epsilon(x,y)=-1$. Then $f(x)/s(x) < f(y)/s(y)$  and $s(x)=s(y)+1$ so that 
$$
\frac {u(f)(y^*)}{s^*(y^*)} -\frac {u(f)(x^*)}{s^*(x^*)} = \frac {f(y)}{s(y)} -\frac {f(x)}{s(y)+1}- \left( \frac {1}{s(y)} - \frac {1}{s(y)+1} \right).
$$
We want to prove that the quantity on either side of the equality above is nonnegative. 
By assumption 
$$
\frac {f(y)}{s(y)} -\frac {f(x)}{s(y)+1} = \frac {(s(y)+1)f(y)-s(y)f(x)}{s(y)(s(y)+1)} >0. 
$$
Hence $(s(y)+1)f(y)-s(y)f(x)$ is a positive integer, so that 
$$
\frac {f(y)}{s(y)} -\frac {f(x)}{s(y)+1} \geq \frac {1}{s(y)(s(y)+1)},
$$
as desired. Note that $u(f)$ is nonnegative since it is increasing and $u(f)(x^*) = f(x)$ when $x^*$ is a minimal element in $P^*$. Hence $u(f) \in \NN(P^*,s^*)$. 
 
 Let $\eta  : \NN(P^*,s^*) \rightarrow \ZZ^P$ be defined by 
 $\eta(g)(x)= g(x^*)-\rho(x)=g(x^*)+\rho^*(x^*)$, where $\rho^*$ is the rank function of $P^*$.  Clearly  $\eta  : \NN(P^*,s^*) \rightarrow \NN(P,s)$ by the exact same arguments as above.  Thus $u^{-1}=\eta$ and $u :  \NN(P,s) \rightarrow \NN(P^*,s^*)$ is a bijection.

Now 
 $u(f)(x^*)/s^*(x^*)= f(x)/s(x)+ (s(x)-1)/s(x)<n+1$ if $f \in \NN_{\leq n}(P,s)$ and $x \in P$, so that $u  : \NN_{\leq n}(P,s) \rightarrow \NN_{< n+1}(P^*,s^*)$ for each $n \in \NN$. 
 
 On the other hand if $g \in \NN_{< n+1}(P^*,s^*)$, then $g(x^*)=q(x^*)(\rho(x)+1) +r(x^*)$ where $0\leq q(x^*) \leq n$ and $0\leq r(x^*) \leq \rho(x)$. Hence 
$$
\frac {\eta(g)(x)}{s(x)} = \frac {g(x^*)}{\rho(x)+1} - \frac {\rho(x)} {\rho(x)+1} \leq  n+ \frac {r(x^*)} {\rho(x)+1}  -\frac {\rho(x)} {\rho(x)+1} \leq n.  
$$
Thus $\eta :  \NN_{< n+1}(P^*,s^*) \rightarrow \NN_{\leq n}(P,s)$ which proves the theorem. 
 \end{proof}

\begin{theorem}\label{recipr}
If $P$ is a sign-ranked labeled poset with nonnegative rank function $\rho$ and $s=\rho+1$, then 
$$
A_{(P,s)}(t)= t^{p-1}A_{(P,s)}(t^{-1})
$$
and 
$$
(-1)^p i(O(P,s),-t)= i(O(P,s),t-2).
$$
\end{theorem}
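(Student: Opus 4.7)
The plan is to prove the palindromicity $A_{(P,s)}(t) = t^{p-1} A_{(P,s)}(t^{-1})$ first and then deduce the Ehrhart-type reciprocity as a formal consequence.

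For palindromicity, I would combine the bijection of Theorem \ref{bij} with identity \eqref{r2} of Proposition \ref{eqs}. By Theorem \ref{bij}, $i(O(P,s), n) = |\NN_{\leq n}(P,s)| = |\NN_{<n+1}(P^*, s^*)|$. Summing over $n \geq 0$, applying \eqref{r2} for $(P^*, s^*)$ at $\xx = \yy = \mathbf{1}$, and shifting indices (the shift is clean because $|\NN_{<0}(P^*, s^*)| = 0$) yields the alternative formula $A_{(P,s)}(t) = \sum_{\tau \in \LL(P^*, s^*)} t^{|D_1(\tau)|}$. On the other hand, since the rank function $\rho$ vanishes on all minimal elements, $s(x) = \rho(x) + 1 = 1$ for minimal $x$, so Corollary \ref{eul2} applies and gives $A_{(P,s)}(t) = \sum_{\tau \in \LL(P,s)} t^{|D_3(\tau)|}$. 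The dualization $\tau \mapsto \tau^*$ from the proof of Theorem \ref{reci} is a bijection $\LL(P,s) \to \LL(P^*, s^*)$ satisfying $|D_3(\tau)| = p - 1 - |D_1(\tau^*)|$ by \eqref{comp}. Substituting into the second expression for $A_{(P,s)}$ and comparing with the first delivers $A_{(P,s)}(t) = t^{p-1} A_{(P,s)}(t^{-1})$.

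For the Ehrhart-type identity, palindromicity forces $\deg A_{(P,s)} \leq p-1$, so writing $A_{(P,s)}(t) = \sum_{k=0}^{p-1} a_k t^k$ with $a_k = a_{p-1-k}$ and expanding $A_{(P,s)}(t)/(1-t)^{p+1}$ as a power series gives $i(O(P,s), n) = \sum_{k=0}^{p-1} a_k \binom{n+p-k}{p}$ as a polynomial in $n$. Then $(-1)^p i(O(P,s), -n) = i(O(P,s), n-2)$ follows by applying the binomial reflection $\binom{-x}{p} = (-1)^p \binom{x+p-1}{p}$ and reindexing $k \leftrightarrow p-1-k$ to exploit the palindromicity of the coefficients; both sides then equal $\sum_j a_j \binom{n+p-2-j}{p}$.

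The main obstacle will be the descent-set bookkeeping: the palindromic reflection fundamentally uses the complementation $D_3(\tau) = [p-1] \setminus D_1(\tau^*)$ established in \eqref{comp}, so one must choose \eqref{r2} (which produces $|D_1|$) on the $(P^*,s^*)$ side and the second form of Corollary \ref{eul2} (which produces $|D_3|$) on the $(P,s)$ side, rather than the more obvious matchings via \eqref{r1} and the first form of Corollary \ref{eul2}. Once the descent sets are paired correctly, the palindromicity telescopes immediately and the Ehrhart reciprocity reduces to the elementary binomial manipulation above.
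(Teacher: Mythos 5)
Your proof is correct and follows essentially the same route as the paper: the palindromicity is obtained by combining Theorem \ref{bij} with \eqref{r2} to get $A_{(P,s)}(t)=\sum_{\tau\in\LL(P^*,s^*)}t^{|D_1(\tau)|}$, and then comparing with $\sum_{\tau\in\LL(P,s)}t^{|D_3(\tau)|}$ from \eqref{eul3} via the complementation \eqref{comp}. The only difference is that for the Ehrhart identity you carry out the binomial computation explicitly, whereas the paper simply cites the standard lemma \cite[Lemma 3.15.11]{EC1}; your computation is a correct unpacking of that lemma.
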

 
\begin{proof}
By  \eqref{r1}, \eqref{r2} and Theorem \ref{bij} 
$$
A_{(P,s)}(t)= \sum_{\tau \in \LL(P,s)} t^{|D(\tau)|}= \sum_{\tau^* \in \LL(P^*,s^*)} t^{|D_1(\tau^*)|}.
$$
The first part of the theorem now follows from \eqref{eul3} and \eqref{comp}. The second part follows from e.g., \cite[Lemma 3.15.11]{EC1}. 
\end{proof} 
 
\section{Real-rootedness and unimodality}\label{roots}
The Neggers-Stanley conjecture asserted that for each labeled poset $P$, the Eulerian polynomial $A_P(t)$ is real-rooted. Although the conjecture is refuted in its full generality \cite{BCount,SCount}, it is known to hold for certain classes of posets \cite{Brenti,Wa1}. Moreover, when $P$ is sign-graded, then the coefficients of $A_P(t)$ form a unimodal sequence \cite{sign,RW}. It is natural to ask for which pairs $(P,s)$
\begin{itemize}
\item[(a)] is $A_{(P,s)}(t)$ real-rooted? 
\item[(b)] do the coefficients of $A_{(P,s)}(t)$ form a unimodal sequence?
\end{itemize}
We first address (a). Suppose $P=([p], \preceq_P)$, $Q=([q], \preceq_Q)$  and $R=([p+q], \preceq_R)$ are labeled posets such that $[p+q]$ is the disjoint union of the two sets $\{u_1<u_2<\cdots < u_p\}$ and $\{v_1<v_2<\cdots < v_q\}$, and $x \preceq_R y$ if and only if either
\begin{itemize}
\item $x=u_i$ and $y = u_j$ for some $i,j \in [p]$ with $i \preceq_P j$, or 
\item $x=v_i$ and $y = v_j$ for some $i,j \in [q]$ with $i \preceq_Q j$.
\end{itemize}
We say that $R$ is a \emph{disjoint union} of $P$ and $Q$ and write $R= P\sqcup Q$. Moreover if $s_P : [p] \rightarrow \ZZ_+$ and 
$s_Q : [q] \rightarrow \ZZ_+$, then we define $s_{P\sqcup Q} : [p+q] \rightarrow \ZZ_+$ as the unique function satisfying 
$s_{P\sqcup Q}(u_i) = s_P(i)$ and $s_{P\sqcup Q}(v_j) = s_Q(j)$. 
\begin{proposition}
If the polynomials $A_{(P,s_P)}(t)$ and $A_{(Q,s_Q)}(t)$ are real-rooted, then so is the polynomial $A_{(P \sqcup Q, s_P\sqcup s_Q)}(t)$.
\end{proposition}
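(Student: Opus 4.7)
The plan is to exploit the product structure of the lecture hall order polytope. The inequalities defining $O(P\sqcup Q, s_{P\sqcup Q})$ decouple into those involving only $P$-coordinates and those involving only $Q$-coordinates, since the relation $\preceq_R$ never compares a $u$-indexed element with a $v$-indexed one. Hence
\[
O(P\sqcup Q, s_{P\sqcup Q}) = O(P, s_P) \times O(Q, s_Q),
\]
and taking lattice point counts gives
\[
i(O(P\sqcup Q, s_{P\sqcup Q}),n) = i(O(P,s_P),n)\cdot i(O(Q,s_Q),n), \qquad n \in \NN.
\]
Writing $A_{(P,s_P)}(t)=\sum_{i=0}^p a_it^i$ and $A_{(Q,s_Q)}(t)=\sum_{j=0}^q b_jt^j$, the defining generating function of the $(P,s)$-Eulerian polynomial reduces the claim to the following purely analytic statement: if $A, B$ are real-rooted polynomials with non-negative coefficients of degrees at most $p$ and $q$ respectively, then the polynomial $C$ determined by
\[
\sum_{n\geq 0}\bigl([t^n]\tfrac{A(t)}{(1-t)^{p+1}}\bigr)\bigl([t^n]\tfrac{B(t)}{(1-t)^{q+1}}\bigr)t^n = \frac{C(t)}{(1-t)^{p+q+1}}
\]
is real-rooted.

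Bilinearity of this ``Hadamard product'' yields an explicit formula $C(t) = \sum_{i,j} a_ib_j M^{p,q}_{ij}(t)$, where the universal polynomials $M^{p,q}_{ij}(t) \in \NN[t]$ are independent of the posets and are determined by the basis expansion
\[
\binom{n+p-i}{p}\binom{n+q-j}{q} = \sum_k [t^k]M^{p,q}_{ij}(t)\cdot\binom{n+p+q-k}{p+q}.
\]
My preferred approach is to argue via the theory of stable polynomials: the bivariate rational function $A(u)B(v)/((1-u)^{p+1}(1-v)^{q+1})$ is stable, being a product of univariate real-stable rational functions with non-negative Taylor coefficients. The diagonal-extraction operator $\Phi(u,v)\mapsto (2\pi i)^{-1}\oint \Phi(z,t/z)\,dz/z$, followed by clearing the denominator $(1-t)^{p+q+1}$, produces $C(t)$; a careful application of the Borcea--Br\"and\'en framework then gives real-rootedness of $C(t)$.

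The main obstacle is making the diagonal-extraction step rigorous, since coordinate-wise operations on stable polynomials do not in general preserve stability. A more self-contained alternative is to induct on $p+q$: factoring $A(t) = (\alpha + t) A_1(t)$ with $\alpha \geq 0$ and $\deg A_1 \leq p-1$, the bilinearity gives $C = \alpha C_0 + C_1$, where $C_0$ and $C_1$ are themselves Hadamard products involving $A_1$. By the inductive hypothesis each is real-rooted, and a direct check using the recursive structure of the basis polynomials $M^{p,q}_{ij}$ shows that $C_0$ and $C_1$ interlace; real-rootedness of the combination $\alpha C_0 + C_1$ then follows from the classical interlacing lemma. Iterating over the roots of $A$ (and symmetrically of $B$) concludes the proof.
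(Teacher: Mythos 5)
Your geometric reduction is exactly the one the paper uses: since $\preceq_R$ never compares a $u$-indexed element with a $v$-indexed one, the lecture hall order polytope factors, the lattice-point counting functions multiply, and the problem becomes the purely analytic statement that the ``Ehrhart--Hadamard product'' of $A(t)/(1-t)^{p+1}$ and $B(t)/(1-t)^{q+1}$ has a real-rooted numerator over $(1-t)^{p+q+1}$ whenever $A$ and $B$ are real-rooted with nonnegative coefficients. At that point the paper simply stops and cites Wagner's theorem on total positivity of Hadamard products, \cite[Theorem 0.1]{Wa2}, which is precisely this statement. So your framing is correct and matches the intended proof; the only issue is your attempt to reprove Wagner's theorem from scratch.

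Neither of your two sketches of that analytic fact closes. The stable-polynomials route is left with the obstacle you yourself flag: diagonal extraction is not among the stability-preserving operations you can invoke off the shelf, and you offer no substitute argument. The inductive route has a structural problem: after factoring $A(t)=(\alpha+t)A_1(t)$ you need not merely that $C_0$ and $C_1$ are real-rooted but that they interlace, and your induction hypothesis only delivers real-rootedness; the sentence ``a direct check \ldots shows that $C_0$ and $C_1$ interlace'' is where all of the work is hiding, and making it precise requires strengthening the inductive hypothesis to an interlacing (or total-positivity) statement stable under the recursion --- which is essentially Wagner's proof. The clean fix is to cite \cite[Theorem 0.1]{Wa2}, after recording the two hypotheses it needs: that $n\mapsto i(O(P,s_P),n)$ is a genuine polynomial in $n$ of degree $p$ (immediate from \eqref{r1} with $\xx=\yy=\mathbf{1}$, since $\deg A_{(P,s_P)}\leq p$), and that $A_{(P,s_P)}$ and $A_{(Q,s_Q)}$ have nonnegative coefficients (Corollary \ref{eul}).
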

\begin{proof}
Clearly $$i((P \sqcup Q, s_P\sqcup s_Q),t)=i(O(P,s_P),t)\cdot i(O(Q,s_Q),t),$$
so the proposition follows from \cite[Theorem 0.1]{Wa2}. 
\end{proof}

It was proved in \cite{SaVis} that if $P=\{1\prec 2 \prec \cdots \prec p\}$ and $s : [p]\rightarrow \ZZ_+$ is arbitrary, then $A_{(P,s)}(t)$ is real-rooted. In Theorem \ref{ordinal} below we generalize this result to ordinal sums of anti-chains. If $P=(X, \preceq_P)$ and $Q=(Y, \preceq_Q)$ are posets on disjoint ground sets, then the \emph{ordinal sum}, $P\oplus Q= (X\cup Y, \preceq)$, is the poset with relations 
\begin{enumerate}
\item $x_1 \prec x_2$, for all $x_1,x_2 \in X$ with $x_1\prec_P x_2$, 
\item $y_1 \prec y_2$, for all  $y_1,y_2 \in X$ with $y_1\prec_Q y_2$, and 
\item $x \prec y$ for all $x \in X$ and $y \in Y$. 
\end{enumerate}

Let $f$ and $g$ be two real-rooted polynomials in $\RR[t]$ with positive leading coefficients. Let further $\alpha_1 \geq \alpha_2 \geq \cdots \geq \alpha_n$ and $\beta_1\geq \beta_2 \geq \cdots \geq \beta_m$ 
be the zeros of $f$ and $g$, respectively. If 
$$\cdots \leq \alpha_2 \leq \beta_2 \leq \alpha_1 \leq \beta_1$$
we say that $f$ is an \emph{interleaver} of $g$ and we write $f \ll g$. We also let $f\ll 0$ and $0\ll f$.
 We call a sequence $F_n=(f_i)^n_{i=1}$ of real-rooted polynomials \emph{interlacing} if $f_i \ll f_j$ for all $1\leq i < j \leq n$. We denote by $\mathcal{F}_n $ the family of all interlacing sequences $(f_i)_{i=1}^n$ of polynomials and we let $\mathcal{F}^+_n$ be the family of $(f_i)_{i=1}^n \in \mathcal{F}_n $ such that $f_i$ has nonnegative coefficients for all $1\leq i \leq n$. 

To avoid unnecessary technicalities we here redefine a labeled poset to be a poset $P=(S, \preceq)$, where $S$ is any set of positive integers. Thus $\LL(P)$ is now the set of rearrangements of $S$ that are also linear extensions of $P$. 

Equip $X(P,s):=\{ (k, x) : x \in P \mbox{ and } 0\leq k <s(x)\}$ with a total order defined by 
$(k, x) < (\ell, y)$ if $k/s(x) < \ell/s(y)$,  or $k/s(x) = \ell/s(y)$ and $x<y$. For $\gamma \in X(P,s)$, let 
$$
A_{(P,s)}^\gamma(t) = \mathop{\sum_{\tau=(\pi,r) \in \LL(P, s)}}_{(r(\pi_1), \pi_1)=\gamma} t^{\des_s(\tau)}.
$$
\begin{theorem}\label{ordinal}
Suppose $P=A_{p_1}\oplus \cdots \oplus A_{p_m}$ is an ordinal sum of anti-chains, and let $s : P \rightarrow \ZZ_+$ be a function which is constant on $A_{p_i}$ for $1\leq i \leq m$. Then $\{ A_{(P,s)}^\gamma(t) \}_{\gamma \in X}$, where $X=X(P,s)$, is an interlacing sequence of polynomials. 

In particular $A_{(P,s)}(t)$ and $A_{(P,s)}^\gamma(t)$ are real-rooted for all $\gamma \in X$. 
\end{theorem}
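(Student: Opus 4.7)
I would proceed by induction on $|P|$. The base case $|P|=1$ is immediate: for $P=\{x\}$ with $s(x)=s_1$, the unique $\tau\in\LL(P,s)$ starting with $(k,x)$ has $\des_s(\tau)=[k>0]$ (using $[\cdot]$ for the Iverson bracket), so the refined polynomials $A_{(P,s)}^{(k,x)}(t)$ form the sequence $1,t,t,\ldots,t$, which is trivially interlacing.

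For the inductive step, I would fix a minimal $x\in A_{p_1}$ and let $P^x:=P\setminus\{x\}$, which is again an ordinal sum of anti-chains with $s^x$ constant on each anti-chain. Every $\tau=(\pi,r)\in\LL(P,s)$ with $\pi_1=x$ and $r(x)=k$ decomposes uniquely as $\tau=(x\pi',k\cdot r')$ with $\tau'=(\pi',r')\in\LL(P^x,s^x)$. Tracking how descents transform (they shift by one position for $i\geq 2$; a new descent at position $1$ occurs iff $(k,x)>(r'(\pi'_1),\pi'_1)$ in the total order on $X$; and the terminal descent $p\in D(\tau)$ is equivalent to $p-1\in D(\tau')$) gives the recurrence
$$
A_{(P,s)}^{(k,x)}(t) = \sum_{\gamma'\in X(P^x,s^x)} t^{[(k,x)>\gamma']}\cdot A_{(P^x,s^x)}^{\gamma'}(t).
$$
By the inductive hypothesis, $(A_{(P^x,s^x)}^{\gamma'})_{\gamma'}$ is an interlacing family in $\mathcal{F}^+$. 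For fixed $x$, the weight row $(t^{[(k,x)>\gamma']})_{\gamma'}$ is a staircase whose threshold grows monotonically with $k$, so a standard interlacing-preserving lemma (cf.\ Savage--Visontai \cite{SaVis}) would yield that $(A_{(P,s)}^{(k,x)})_{k=0}^{s_1-1}$ is interlacing.

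The hard part will be stitching these fixed-$x$ sub-families into a single interlacing family across all of $X(P,s)$, since different $x\in A_{p_1}$ produce different sub-posets $P^x$. I would handle this by iterating the peeling $p_1$ times, reducing to the common sub-poset $Q:=A_{p_2}\oplus\cdots\oplus A_{p_m}$. Decomposing $\tau=(\sigma\pi^Q,\rho r^Q)$ with $(\sigma,\rho)$ a colored permutation of $A_{p_1}$ and $\tau^Q\in\LL(Q,s|_Q)$, the descent count should split as $\des_s(\tau)=|D_{\text{bot}}(\sigma,\rho)|+[\delta>\gamma'']+\des_s(\tau^Q)$, where $\delta:=(\rho(\sigma_{p_1}),\sigma_{p_1})$, $\gamma'':=(r^Q(\pi^Q_1),\pi^Q_1)$, and $D_{\text{bot}}$ records the internal descents of the bottom layer. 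Summing then expresses each $A_{(P,s)}^{(k,x)}$ as a linear combination of $\{A_{(Q,s|_Q)}^{\gamma''}\}_{\gamma''}$ whose coefficients organize, via the terminal bottom state $\delta$, into a double staircase; a multi-letter refinement of the interlacing-preserving lemma should then yield interlacing over all of $X(P,s)$. It is in this step that the constancy of $s$ on $A_{p_1}$ becomes essential, so that the order on terminal states $\delta$ is compatible with the order on initial states $(k,x)$.

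Once interlacing of $\{A_{(P,s)}^\gamma\}_{\gamma\in X}$ in $\mathcal{F}^+$ is established, real-rootedness of each $A_{(P,s)}^\gamma$ is inherent, and real-rootedness of $A_{(P,s)}(t)=\sum_\gamma A_{(P,s)}^\gamma(t)$ follows from the standard fact that a sum of polynomials in a common interlacing family with nonnegative coefficients is real-rooted.
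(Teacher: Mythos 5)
Your base case, your one-letter peeling recursion
\[
A_{(P,s)}^{(k,x)}(t)=\sum_{\gamma'\in X(P^x,s^x)} t^{[(k,x)>\gamma']}A_{(P^x,s^x)}^{\gamma'}(t),
\]
and your appeal to the Savage--Visontai interlacing-preservation lemma are exactly the paper's route (the paper runs a double induction, on $m$ and then on $p_1$, using precisely this recursion). Up to and including the claim that each fixed-$x$ subfamily $(A_{(P,s)}^{(k,x)})_{k}$ is interlacing, your argument is correct.

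The genuine gap is the step you yourself flag as ``the hard part,'' and your proposed fix does not close it. To conclude that the \emph{whole} family $\{A_{(P,s)}^\gamma\}_{\gamma\in X(P,s)}$ is interlacing you must compare $A^{(k,x)}$ and $A^{(\ell,y)}$ for \emph{distinct} $x,y\in A_{p_1}$, and these are staircase combinations of \emph{different} input families $\{A_{(P^x,s^x)}^{\gamma'}\}$ and $\{A_{(P^y,s^y)}^{\gamma'}\}$. These input families genuinely differ: e.g.\ for $P=A_2\oplus A_1$ with $A_2=\{1,3\}$, $A_1=\{2\}$ and $s\equiv 1$, removing $1$ leaves the non-naturally labeled chain $3\prec 2$ with refined sequence $(0,t)$, while removing $3$ leaves the naturally labeled chain $1\prec 2$ with refined sequence $(1,0)$; so no order-isomorphism identifies them, and the one-sequence hypothesis of \cite[Theorem 2.3]{SaVis} is not met. (This identification \emph{does} hold when $m=1$, which is why the pure anti-chain case goes through; the paper's assertion that $m>1$, $p_1>1$ works ``just as for $m=1$'' is itself laconic on this point.) Your remedy---peeling all of $A_{p_1}$ at once down to $Q=A_{p_2}\oplus\cdots\oplus A_{p_m}$---replaces the monomial weights $1,t$ by coefficients $\sum_{(\sigma,\rho)}t^{|D_{\mathrm{bot}}(\sigma,\rho)|+[\delta>\gamma'']}$, which are honest polynomials in $t$. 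A matrix with polynomial entries preserves interlacing only under nontrivial compatibility conditions (the $2\times 2$-minor criteria in \cite{PSurvey}), and the asserted ``double staircase'' structure and ``multi-letter refinement of the interlacing-preserving lemma'' are neither standard citable facts nor verified here; moreover the reduction degenerates when $m=1$ ($Q$ empty). As written, the cross-$x$ interlacing is therefore a conjecture, not a proof; one standard way to actually close it is to strengthen the induction hypothesis, e.g.\ to polynomials refined by the first \emph{two} letters, so that each peeling step again uses a single input family.

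Your final paragraph (deducing real-rootedness of $A_{(P,s)}=\sum_\gamma A_{(P,s)}^\gamma$ from interlacing of the family, via Corollary \ref{eul}) is correct once the interlacing statement is established.
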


\begin{proof}
The proof is by induction over $m$. Suppose $m=1$, $p_1=n$, $A_n$ is the anti-chain on $[n]$, and $s(A_n)=\{s\}$. We prove the case $m=1$ by induction over $n$. If $n=1$ we get the sequence $1,t,t,\ldots, t$ which is interlacing. Otherwise if $\gamma=(k,\pi_1)$, then
\begin{equation*}
A_{(A_n,s)}^\gamma(t) = \sum_{\kappa < \gamma} tA_{(A_{n-1},s')}^\kappa(t)+ \sum_{\kappa \geq \gamma} A_{(A_{n-1},s')}^\kappa(t),
\end{equation*}
where $s'$ is $s$ restricted to $A_{n-1}$. This recursion preserves the interlacing property, see \cite[Theorem 2.3]{SaVis} and \cite{PSurvey}, which proves the case $m=1$ by induction. 

Suppose $m>1$. The proof for $m$ is again by induction over $p_1=n$. If $p_1=1$, then 
$$
A_{(P,s)}^\gamma(t) = \sum_{\kappa < \gamma} tA_{(P',s')}^\kappa(t)+ \sum_{\kappa > \gamma} A_{(P',s')}^\kappa(t),
$$
Where $P'=A_2 \oplus \cdots \oplus A_m$, and where  $s'$ is the restrictions to $P'$. Hence the case $p_1=1$ follows by induction (over $m$) since this recursion preserves the interlacing property, see \cite[Theorem 2.3]{SaVis}. 

The case $m>1$ and $p_1>1$ follows by induction over $p_1$ just as for the case $m=1$, $n>1$. 

Hence $\{A_{(P,s)}^\gamma(t)\}_\gamma$ is an interlacing sequence, and thus  
$$
A_{(P,s)}(t)= \sum_\gamma A_{(P,s)}^\gamma(t), 
$$
is real-rooted by e.g., \cite[Theorem 2.3]{SaVis}.
\end{proof}

Next we address (b). A palindromic polynomial $g(t)=a_0+a_1t +\cdots+a_nt^n$ may be written uniquely as 
$$
g(t) = \sum_{k=0}^{\lfloor d/2 \rfloor} \gamma_k(g) t^k (1+t)^{d-2k}, 
$$
where $\{\gamma_k(g)\}_{k=0}^{\lfloor d/2 \rfloor}$ are real numbers.
If $\gamma_k(g) \geq 0$ for all $k$, then we say that $g(t)$ is $\gamma$-\emph{positive}, see \cite{PSurvey}. Note that if $g(t)$ is $\gamma$-positive, then  $\{a_i\}_{i=0}^n$ is a \emph{unimodal sequence}, i.e., there is an index $m$ such that $a_0 \leq \cdots \leq a_m \geq a_{m+1} \geq \cdots \geq a_n$. 

\begin{conjecture}\label{gamma}
Suppose $P$ is a sign-ranked labeled poset with nonnegative rank function $\rho$ and $s=\rho+1$, then $A_{(P,s)}(t)$ is $\gamma$-positive. 
\end{conjecture}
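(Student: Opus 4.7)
The goal is to show that the palindromic polynomial $A_{(P,s)}(t)$, whose palindromicity of degree $p-1$ is granted by Theorem \ref{recipr}, admits an expansion $\sum_{k=0}^{\lfloor (p-1)/2 \rfloor}\gamma_k\,t^k(1+t)^{p-1-2k}$ with $\gamma_k\geq 0$. Following the Foata--Strehl valley-hopping paradigm used in \cite{PSurvey} for the classical Eulerian polynomials and its extension to sign-graded $P$-Eulerian polynomials in \cite{sign}, my plan is to construct a $\ZZ_2^{p-1}$-action on $\LL(P,s)$ whose orbits partition the set into subsets of size $2^{p-1-2k}$ each contributing $t^k(1+t)^{p-1-2k}$ to $A_{(P,s)}(t)$.

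Starting from $A_{(P,s)}(t)=\sum_{\tau\in\LL(P,s)} t^{\des_s(\tau)}$ as provided by Corollary \ref{eul}, I would define for each position $i\in[p-1]$ an involution $\phi_i$ on $\LL(P,s)$ that toggles whether $i\in D(\tau)$ while preserving the status of every other position. Two natural flipping mechanisms present themselves: (a) when $\pi_i$ and $\pi_{i+1}$ are incomparable in $P$, swap them in the underlying permutation $\pi$; (b) when $\{\pi_i,\pi_{i+1}\}$ is a covering relation of $P$, modify the colors $r(\pi_i)$ or $r(\pi_{i+1})$ by a single unit so that the ratio test in the definition of $D_1(\tau)$ in Section \ref{gener} reverses. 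Mechanism (b) exploits the hypothesis $s=\rho+1$, since a unit change in a color then corresponds to a unit change in rank, and the sign-ranked condition $\epsilon\in\{\pm 1\}$ ensures that the arithmetic on both sides of the ratio balances precisely. Granting that each $\phi_i$ is a well-defined involution, that $\phi_i$ and $\phi_j$ commute for $i\ne j$, and that every non-peak position of every $\tau$ can be flipped by some such $\phi_i$, the orbits of the generated $\ZZ_2^{p-1}$-action yield $\gamma_k$ as the number of orbits with exactly $k$ peaks.

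The main obstacle is the uniform definition of $\phi_i$ and the verification that at every non-peak position either (a) or (b) applies. When $\pi_i\prec\pi_{i+1}$ in $P$ is realized only by a non-covering chain, neither mechanism is directly available, so one needs the sign-ranked structure together with the double-ascent/double-descent condition to force the following dichotomy: at any non-peak position, the consecutive elements $\pi_i,\pi_{i+1}$ of $\pi$ are either incomparable in $P$ or connected by a covering relation. Making this dichotomy precise via the rank function, ensuring that the color-shift in (b) stays inside $\{0,\ldots,s(\pi_i)-1\}\times\{0,\ldots,s(\pi_{i+1})-1\}$, and checking commutativity $\phi_i\phi_j=\phi_j\phi_i$ (particularly when $|i-j|=1$) are the technical steps where the sign-ranked hypothesis and the choice $s=\rho+1$ must be used in an essential way. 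Specializing $\rho\equiv 0$ ought to recover the known sign-graded proof of \cite{sign} as a consistency check, and Theorem \ref{ordinal} suggests that in the ordinal-sum-of-anti-chains case the action reduces to an action compatible with the interlacing recursion, providing an independent verification of the conjecture in that regime.
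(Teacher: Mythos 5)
The statement you are attempting is Conjecture \ref{gamma} of the paper: the authors do not prove it, and it is presented as open except for the special case where the rank function takes values only in $\{0,1\}$, which the accompanying remark says follows from the method of \cite[Theorem 4.2]{sign} together with Theorem \ref{ordinal}. So there is no proof in the paper to compare yours against; the relevant question is whether your argument actually closes the conjecture, and it does not. What you have written is a program, not a proof: every genuinely hard step is introduced with ``granting that'' --- that each $\phi_i$ is a well-defined involution, that the $\phi_i$ commute, that the color shift stays in range, and that every non-peak position can be flipped. None of these is verified, and they are precisely where such valley-hopping arguments tend to fail. (One small point in your favor: the dichotomy you worry about is automatic --- in a linear extension, consecutive entries $\pi_i,\pi_{i+1}$ that are comparable in $P$ must form a covering relation, since any $z$ with $\pi_i\prec z\prec \pi_{i+1}$ would have to occupy a position strictly between $i$ and $i+1$.)

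The concrete obstructions you have not addressed are these. First, in your mechanism (b), shifting $r(\pi_i)$ or $r(\pi_{i+1})$ by one unit changes the ratios $r(\pi_i)/s(\pi_i)$ and $r(\pi_{i+1})/s(\pi_{i+1})$ that also govern the descent status of positions $i-1$ and $i+1$, so $\phi_i$ will in general not preserve ``the status of every other position,'' and commutativity for $|i-j|=1$ is exactly where this bites. Second, the statistic being symmetrized is $\des_s(\tau)=|D(\tau)|$ with $D(\tau)\subseteq[p]$, where $p\in D(\tau)$ if and only if $r(\pi_p)>0$, so the action must also handle the boundary position $p$; the group $\ZZ_2^{p-1}$ you posit is indexed by $[p-1]$ and ignores this. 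Third, when $r(\pi_i)$ sits at an endpoint of $\{0,\dots,s(\pi_i)-1\}$ the unit shift may leave the allowed range, and you give no fallback. Until these are resolved the argument does not establish $\gamma$-positivity; as a research direction it is sensible and consistent with how the $\rho\in\{0,1\}$ case is handled via \cite{sign} and Theorem \ref{ordinal}, but it should be presented as a plan, not as a proof of the conjecture.
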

\begin{remark}
Let $P$ be a sign-ranked labeled poset with a rank function $\rho$ with values only in $\{0,1\}$, and let $s=\rho+1$. Following the proof of \cite[Theorem 4.2]{sign}, with the use of Theorem \ref{ordinal},  it follows that Conjecture \ref{gamma} holds for $(P,s)$. We omit the technical details in recalling the proof here. 
\end{remark}
If $P$ is a naturally labeled ranked poset and $s=\rho+1$, then $O(P,s)$ is a closed integral polytope and $A_{(P,s)}(t)$ is the so called $h^*$-\emph{polynomial} of $O(P,s)$. If the following conjecture is true, then the coefficients of $A_{(P,s)}(t)$ form a unimodal sequence by a powerful theorem of Bruns and R\"omer \cite[Theorem 1]{BR}. 
\begin{conjecture}
Suppose $P$ is a naturally labeled ranked poset, and let $s=\rho+1$. Then $O(P,s)$ (or some related polytope with the same Ehrhart polynomial) has a regular and unimodular triangulation. 
\end{conjecture}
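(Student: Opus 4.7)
The plan is to construct an explicit regular unimodular triangulation of $O(P,s)$, or failing that, of a companion polytope with the same Ehrhart polynomial, so that the Bruns--R\"omer implication can then be invoked. The starting point is Lemma~\ref{decomp}: the cone $C(P,s)$ decomposes as $\bigsqcup_{\pi \in \LL(P)} C(P_\pi,s)$, and intersecting with $\{f : f(x) \le s(x)\}$ yields a polyhedral subdivision of $O(P,s)$ into cells $O(P_\pi,s)$, each a slice of a chain lecture hall polytope with weights $s(\pi_1),\ldots,s(\pi_p)$. I would refine each such cell using the residue decomposition from the proof of Theorem~\ref{maingen}: writing $f(\pi_i) = q(\pi_i) s(\pi_i) + r(\pi_i)$, the fiber over a fixed residue vector $r$ is an integer translate of a standard order simplex $\{0 \le \alpha_1 \le \cdots \le \alpha_p \le n_r\}$ with strict/weak pattern dictated by $D_1(\tau)$, and every such simplex admits Stanley's canonical staircase triangulation into unimodular pieces indexed by permutations of $[p]$. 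The differ-by-one consequence of $s = \rho + 1$ is precisely what makes adjacent residue fibers abut compatibly: rolling $r(\pi_i)$ over from $s(\pi_i)-1$ to $0$ is absorbed by incrementing $q(\pi_i)$, so the local triangulations on each $O(P_\pi,s)$ knit together across residue classes.

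The hard step is gluing across adjacent linear extensions $\pi, \pi' \in \LL(P)$ that differ by a transposition. At the shared facet of $O(P_\pi,s)$ and $O(P_{\pi'},s)$ one must verify both that the triangulations agree as piecewise-linear structures and that any simplex straddling the seam remains a lattice simplex of normalized volume one. Regularity can in principle be arranged by lifting with a generic rational linear functional, lexicographically ordered on $(\pi, r, \alpha)$. The ``related polytope'' escape clause in the statement suggests a parallel strategy that may be more tractable: replace $O(P,s)$ by the order polytope $O(\widetilde P)$ of a blowup poset $\widetilde P$ obtained by substituting each $x \in P$ with a chain of $s(x) = \rho(x) + 1$ elements, glued along covers of $P$ in a rank-respecting way. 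Because $\widetilde P$ is naturally labeled, Stanley's canonical triangulation of $O(\widetilde P)$ is automatically regular and unimodular by \cite[Ch.~3.15]{EC1}, so it would suffice to produce an Ehrhart-preserving linear surjection $O(\widetilde P) \to O(P,s)$, or to exhibit a unimodular change of coordinates identifying the two $h^*$-polynomials.

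The principal obstacle is controlling the lattice volume across cells. The lecture hall scaling magnifies the Euclidean volume of $O(P,s)$ by $\prod_x s(x)$ relative to $O(P)$, and this factor must be distributed among exactly $p! \cdot \mathrm{vol}(O(P,s))$ unimodular simplices fitting together on the nose. I expect the blowup-and-project strategy to be the more promising route, with the technical crux being to show that the projection restricts to a bijection on lattice points of every dilate $n\, O(P,s)$ --- essentially an extension of the $(q,r)$-decomposition of Theorem~\ref{maingen} to the $\widetilde P$ side. If such a projection cannot be constructed (for example because the covers of $P$ with $\epsilon(x,y)=-1$ force a mismatch in the blowup), the direct approach would need a lifting function tailored to force unimodularity of the cross-cell simplices, a step which appears to require genuinely new combinatorial input beyond the generating-function framework developed in this paper.
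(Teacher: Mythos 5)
This statement is a \emph{conjecture} in the paper; the authors offer no proof, so there is nothing to compare your argument against, and what you have written is a research plan rather than a proof---you concede as much when you write that the gluing step ``appears to require genuinely new combinatorial input.'' Beyond that, two concrete steps fail. First, the residue decomposition from Theorem~\ref{maingen} does not give a polyhedral subdivision of $O(P_\pi,s)$. For fixed $r$ the fiber $\{f=qs+r\}$ is the translate $r+D_s\widetilde C_\pi$ of a (shifted) order cone, where $D_s=\diag(s(1),\ldots,s(p))$; these translates are pairwise disjoint and together capture every lattice point of $C(P_\pi,s)$, but as real polyhedra they neither cover $C(P_\pi,s)$ nor meet along common faces (already for $p=1$, $s=2$ the two fibers are $[0,\infty)$ and $[1,\infty)$, which overlap). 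You are conflating a partition of lattice points---which is exactly what the $h^*$-computation in Theorem~\ref{maingen} needs---with a geometric triangulation. The cone $C(P_\pi,s)$ is simplicial with generator matrix of determinant $\prod_i s(\pi_i)$, hence not unimodular, and producing a regular unimodular refinement of it compatibly across the half-open decomposition of Lemma~\ref{decomp} is precisely the open content of the conjecture; it is unresolved even when $P$ is a single naturally labeled chain, i.e., for the classical lecture hall polytopes.

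Second, the blowup strategy cannot work as stated. If $\widetilde P$ replaces each $x$ by a chain of $s(x)$ elements, then $\dim O(\widetilde P)=\sum_x s(x)>p=\dim O(P,s)$, so the two Ehrhart polynomials have different degrees and no ``Ehrhart-preserving linear surjection'' can exist; nor do the $h^*$-polynomials agree. Take $P=\{1\prec 2\}$ naturally labeled with $\rho=(0,1)$, $s=(1,2)$: then $O(P,s)$ is the triangle $0\le f_1\le f_2/2\le 1$ with $h^*$-polynomial $1+t$, while $\widetilde P$ is a $3$-chain whose order polytope is a unimodular simplex with $h^*$-polynomial $1$. Any viable ``related polytope'' must live in dimension $p$ and have normalized volume $|\LL(P)|\prod_x s(x)$, which your construction does not achieve. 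The conjecture remains open.
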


\section{Applications}\label{appl}
In this section we derive some applications of the generating function identities obtained in Section \ref{gener}. 
If $\alpha = (\alpha_1,\ldots, \alpha_p)$ is  a sequence, let $|\alpha|=\alpha_1+\cdots+\alpha_p$. For $\tau=(\pi ,r) \in \LL(P,s)$, let 
\begin{align*}
\comaj (\tau) &= \sum_{i\in D(\tau)} p-i, \mbox{ and }\\
\lhp (\tau) &=|r| + \sum_{i\in D(\tau)} s(\pi_{i+1}) + \cdots + s(\pi_p) 
\end{align*}
\begin{theorem}
\label{uq}
If $P$ is a labeled poset and $s : [p] \rightarrow \ZZ_+$, then 
\begin{eqnarray}
\sum_{n \geq 0} \left( \sum_{f \in \NN_{\leq n}(P,s)} q^{|r(f)|}u^{|q(f)|} \right)  t^n
 = \frac{\displaystyle  \sum_{\tau \in \LL(P,s)} q^{|r|} u^{\comaj (\tau)}t^{\des_s (\tau)} }
 {\displaystyle \prod_{i=0}^p (1-u^it)}. 
 \end{eqnarray}

\end{theorem}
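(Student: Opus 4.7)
The plan is to deduce this identity directly from equation \eqref{r1} of Proposition \ref{eqs} via the principal specialization $x_1 = x_2 = \cdots = x_p = u$ and $y_1 = y_2 = \cdots = y_p = q$. Under this substitution, the monomial $\yy^{r(f)}\xx^{q(f)}$ collapses to $q^{|r(f)|} u^{|q(f)|}$, so the left-hand side of \eqref{r1} specializes precisely to the left-hand side of the claimed identity.

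On the right-hand side of \eqref{r1}, the factor $\yy^r$ becomes $q^{|r|}$. For each $i \in D(\tau)$, the numerator factor $x_{\pi_{i+1}} \cdots x_{\pi_p}$ collapses to $u^{p-i}$, and taking the product over $i \in D(\tau)$ produces
$$u^{\sum_{i \in D(\tau)}(p-i)} = u^{\comaj(\tau)}$$
by the definition of $\comaj$ given just above the theorem. For each $i \in [p]$, the denominator factor $(1 - x_{\pi_i}\cdots x_{\pi_p} t)$ becomes $(1 - u^{p-i+1} t)$, and taking the product over $i \in [p]$ yields $\prod_{j=1}^p (1 - u^j t)$. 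Combined with the extra $(1-t) = (1 - u^0 t)$ in the denominator of \eqref{r1}, this reassembles as $\prod_{i=0}^p (1 - u^i t)$, matching the denominator of the target identity. Finally, $|D(\tau)| = \des_s(\tau)$ by Corollary \ref{eul}, so the numerator collects into $\sum_{\tau \in \LL(P,s)} q^{|r|} u^{\comaj(\tau)} t^{\des_s(\tau)}$, as desired.

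There is no genuine obstacle here: the theorem is a straightforward principal specialization of the multivariate identity \eqref{r1}, and the only bookkeeping is to recognize $\comaj$ in the exponent of $u$ arising from $\sum_{i \in D(\tau)}(p-i)$ and to notice that the $s$-dependence in the denominator evaporates, leaving the clean product $\prod_{i=0}^p (1-u^i t)$.
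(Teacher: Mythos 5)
Your proof is correct and is exactly the paper's argument: the authors likewise set $x_i=u$ and $y_i=q$ in \eqref{r1} and perform the same bookkeeping to identify $u^{\comaj(\tau)}$ in the numerator and $\prod_{i=0}^p(1-u^it)$ in the denominator. (One tiny nitpick: $\des_s(\tau)=|D(\tau)|$ is the definition given just before Corollary \ref{eul}, not a consequence of that corollary.)
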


\begin{proof}
Set $x_i= u$ and $y_i=q$ for all $1\leq i \leq p$ in (\ref{r1}).
Then
\begin{eqnarray*}
\sum_{\tau \in \LL(P,s)} \yy^r\frac {\displaystyle \prod_{i\in D(\tau)}  x_{\pi_{i+1}} \cdots x_{\pi_{p}}}{\displaystyle \prod_{i\in [p]}(1-x_{\pi_i}\cdots x_{\pi_p}t)}\frac {t^{|D(\tau)|}} {1-t} &=&
\sum_{\tau \in \LL(P,s)} \frac{\displaystyle  q^{|r|} u^{\comaj (\tau)}t^{\des_s(\tau)} }{\displaystyle \prod_{i\in [p]}(1-tu^{p+1-i})(1-t)}\\
&=&
 \frac{\displaystyle \sum_{\tau \in \LL(P,s)} q^{|r|} u^{\comaj (\tau)}t^{\des_s(\tau)} }{\displaystyle  \prod_{i\in [p]}(1-tu^{i})(1-t)}.
\end{eqnarray*}
The theorem follows.
\end{proof}

\begin{theorem}
If $P$ is a labeled poset and $s : [p] \rightarrow \ZZ_+$, then 
\begin{eqnarray}
\sum_{n \geq 0} \left( \sum_{f \in \NN_{\leq n}(P,s)} q^{|f|}\right) t^n
 = \sum_{\tau \in \LL(P,s)} \frac{\displaystyle  q^{\lhp (\tau)}t^{\des_s(\tau)} }
 {\displaystyle  \prod_{i\in [p]} \left(1-tq^{\sum_{j=i}^p s(\pi_j)} \right) (1-t)  }.
 \end{eqnarray}
\end{theorem}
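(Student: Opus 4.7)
The plan is to obtain this identity as a direct specialization of equation \eqref{r1} of Proposition \ref{eqs}. The point is that tracking $q^{|f|}$ on the left-hand side corresponds to a specific assignment of the $x$ and $y$ variables that absorbs the $s$-weighted base-$s$ decomposition $f(x) = q(f)(x) s(x) + r(f)(x)$.

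Concretely, the first step is to substitute $y_i = q$ and $x_i = q^{s(i)}$ for each $i \in [p]$ in \eqref{r1}. For the summand on the left, this gives
$$\yy^{r(f)} \xx^{q(f)} = \prod_{i=1}^{p} q^{r(f)(i)}\, q^{s(i)\, q(f)(i)} = q^{\sum_{i}(s(i) q(f)(i) + r(f)(i))} = q^{|f|},$$
so that $F_{(P,s)}(\xx, \yy; n)$ collapses to $\sum_{f \in \NN_{\leq n}(P,s)} q^{|f|}$, matching the required left-hand side.

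Next I would process the right-hand side of \eqref{r1} under the same substitution. We have $\yy^r = q^{|r|}$, and for each $i \in [p]$,
$$x_{\pi_i} \cdots x_{\pi_p} = q^{\sum_{j=i}^p s(\pi_j)},$$
which produces exactly the denominator factors $1 - t q^{\sum_{j=i}^p s(\pi_j)}$ claimed in the theorem (the extra $(1-t)$ coming from $\frac{t^{|D(\tau)|}}{1-t}$). For the numerator,
$$\prod_{i \in D(\tau)} x_{\pi_{i+1}} \cdots x_{\pi_p} = q^{\sum_{i \in D(\tau)} (s(\pi_{i+1}) + \cdots + s(\pi_p))},$$
so combining with the $q^{|r|}$ factor gives exactly $q^{\lhp(\tau)}$ by definition of $\lhp$. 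Finally $t^{|D(\tau)|} = t^{\des_s(\tau)}$ by Corollary \ref{eul}.

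There is essentially no obstacle here; the only thing to double-check is that the expression $\lhp(\tau) = |r| + \sum_{i \in D(\tau)} (s(\pi_{i+1}) + \cdots + s(\pi_p))$ indeed arises from combining the contributions of $\yy^r$ and the numerator product, and that the product $\prod_{i\in[p]}(1-x_{\pi_i}\cdots x_{\pi_p} t)$ is unchanged when we re-index $i \mapsto i$ (the exponents $\sum_{j=i}^p s(\pi_j)$ depend on $\pi$ but the product ranges over all $i \in [p]$, so the set of factors is intrinsic to $\pi$). Thus the theorem follows immediately from Proposition \ref{eqs} by this choice of variables.
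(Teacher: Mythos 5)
Your proposal is correct and is exactly the paper's proof: the authors likewise obtain the identity by setting $x_i = q^{s(i)}$ and $y_i = q$ in \eqref{r1}, and your verification of how the left side collapses to $q^{|f|}$ and the right side produces $q^{\lhp(\tau)}t^{\des_s(\tau)}$ with the stated denominator just spells out the details they leave implicit.
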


\begin{proof}
Set $x_i= q^{s(i)}$ and $y_i=q$ for all $1\leq i \leq p$ in (\ref{r1}).
\end{proof}

\begin{corollary} 
\label{qv}
If $P$ is an anti-chain and $s : [p] \rightarrow \ZZ_+$, then 
\begin{eqnarray}
\label{qve}
\sum_{n \geq 0} \prod_{i=1}^p   \left(   u^n + [n]_{u}[s(i)]_q       \right) t^n
 =  \frac{\displaystyle \sum_{\tau \in \LL(P,s)} q^{|r|} u^{\comaj (\tau)}t^{\des_s(\tau)} }
 {\displaystyle \prod_{i=0}^p (1-u^it)}.
 \end{eqnarray}
\end{corollary}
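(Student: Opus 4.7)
The plan is to deduce this identity by specializing Corollary \ref{6} under the substitution $x_i \mapsto u$ and $y_i \mapsto q$ for all $i \in [p]$, exactly in the spirit of the proof of Theorem \ref{uq} (which does the same specialization on \eqref{r1}). Since the left-hand side of Corollary \ref{6} becomes, term by term, $\sum_{n \geq 0} \prod_{i=1}^p \bigl(u^n + [n]_u [s(i)]_q\bigr) t^n$ after the substitution, matching the left-hand side of Corollary \ref{qv} is automatic.

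The work is therefore on the right-hand side, where I would simplify each factor separately. The weight $\yy^r$ collapses to $q^{|r|}$. The numerator $\prod_{i \in D(\tau)} x_{\pi_{i+1}} \cdots x_{\pi_p}$ becomes $u^{\sum_{i \in D(\tau)} (p-i)} = u^{\comaj(\tau)}$, directly from the definition of $\comaj$. The denominator $\prod_{i \in [p]}(1 - x_{\pi_i}\cdots x_{\pi_p} t)$ becomes $\prod_{i=1}^p (1 - u^{p+1-i} t) = \prod_{j=1}^p (1 - u^j t)$, because $\pi$ is a permutation of $[p]$, so reindexing by $j = p+1-i$ does not depend on the choice of $\tau$. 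Multiplying by the remaining $1/(1-t)$ produces the compact denominator $\prod_{i=0}^p (1 - u^i t)$. Finally, $t^{|D(\tau)|} = t^{\des_s(\tau)}$ by Corollary \ref{eul}. Collecting these gives the right-hand side of Corollary \ref{qv} verbatim.

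There is no real obstacle: the identity is a direct specialization of Corollary \ref{6}, and the only point worth checking carefully is that the denominator factors are independent of $\pi$ after the specialization, which is what makes it possible to pull $\prod_{i=0}^p (1 - u^i t)$ out of the sum over $\tau$.
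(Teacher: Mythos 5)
Your proposal is correct and matches the paper's own proof, which simply combines Theorem \ref{uq} (the specialization $x_i=u$, $y_i=q$ of \eqref{r1}, yielding exactly the right-hand side simplification you carry out) with Corollary \ref{6} (which supplies the product form of the left-hand side for anti-chains). The only cosmetic difference is that you redo the Theorem \ref{uq} computation inline rather than citing it.
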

\begin{proof}
The corollary follows from Theorem \ref{uq} and Corollary \ref{6}.
\end{proof}
 The \emph{wreath product} of $\sym_p$ with a cyclic group of order $k$ has elements 
$$\ZZ_k \wr \sym_p = \{(\pi, r) :\pi\in \sym_p \text{ and }  r : [p] \rightarrow \ZZ_k\}.$$
The elements of $\ZZ_k \wr \sym_p$ are often thought of as $r$-colored permutations. We may identify $\ZZ_k \wr \sym_p$ with 
$\LL(P,s)$ where $P$ is an anti-chain on $[p]$ and $s(i)=k$ for all $k \in [p]$. 
For $\tau=(\pi ,r) \in  \ZZ_k \wr \sym_p$ define 
\begin{align*}
\fmaj (\tau) &= |r| + k\cdot \comaj (\tau).
\end{align*}
Note that $\lhp (\tau)$ agrees with $\fmaj (\tau)$ when $s=(k,k, \ldots  , k).$

Below we derive a Carlitz formula for  $\ZZ_k \wr \sym_p$ first proved by Chow and Mansour in \cite{ChMa}. 
\begin{corollary} 
\label{kn1}
For positive integers $p$ and $k$,
\begin{eqnarray}
\label{ant}
\sum_{n\geq 0}  [kn+1]_q^p t^n
 =  \frac{ \displaystyle  \sum_{\tau \in \ZZ_k \wr \sym_p}   t^{\des_s(\tau)}  q^{\fmaj ( \tau )}     }   {\displaystyle \prod_{i=0}^{p} \left( 1-t q^{ki}   \right)}. 
\end{eqnarray}
\end{corollary}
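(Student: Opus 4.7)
The plan is to derive the formula as a direct specialization of Corollary \ref{qv}. As noted just before the statement, $\ZZ_k\wr\sym_p$ is identified with $\LL(P,s)$ where $P=A_p$ is an anti-chain on $[p]$ and $s$ is the constant function $s(i)=k$. So the identity to prove is exactly what Corollary \ref{qv} says when we plug in the constant coloring $s\equiv k$ and perform a single substitution of the remaining variables.

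First I would substitute $s(i)=k$ in \eqref{qve}. The left-hand side then becomes
$$
\sum_{n\geq 0}\prod_{i=1}^p\bigl(u^n + [n]_u[k]_q\bigr)t^n,
$$
and the denominator on the right becomes $\prod_{i=0}^p(1-u^it)$; the numerator sum is already indexed by $\tau\in\ZZ_k\wr\sym_p$. Next I would set $u=q^k$. Using the telescoping identity
$$
[n]_{q^k}[k]_q = (1+q^k+\cdots+q^{(n-1)k})(1+q+\cdots+q^{k-1}) = [kn]_q,
$$
the factor $u^n + [n]_u[k]_q$ collapses to $q^{kn}+[kn]_q = [kn+1]_q$, so the LHS becomes $\sum_{n\geq 0}[kn+1]_q^p\,t^n$ as desired.

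On the right-hand side, the substitution $u=q^k$ turns $q^{|r|}u^{\comaj(\tau)}$ into $q^{|r|+k\cdot\comaj(\tau)} = q^{\fmaj(\tau)}$ by the definition of $\fmaj$ recalled just above the corollary, and $\prod_{i=0}^p(1-u^it)$ becomes $\prod_{i=0}^p(1-tq^{ki})$. The descent statistic $\des_s(\tau)$ is unchanged by the substitution. Combining these gives exactly the right-hand side of \eqref{ant}, finishing the proof.

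There is no substantive obstacle: everything reduces to the bookkeeping of a single substitution, and the only mildly delicate point is the observation $u^n+[n]_u[k]_q=[kn+1]_q$ at $u=q^k$, which is a one-line $q$-identity. All the combinatorial content (interpretation of the descent set, the statistic $\comaj$, and the enumeration of $\LL(A_p,s)=\ZZ_k\wr\sym_p$) has already been established earlier in the paper.
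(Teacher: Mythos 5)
Your proof is correct and matches the paper's argument essentially verbatim: both specialize Corollary \ref{qv} with $s\equiv k$ and $u=q^k$, use the identity $q^{kn}+[n]_{q^k}[k]_q=[kn+1]_q$ on the left, and note that $q^{|r|}u^{\comaj(\tau)}$ becomes $q^{\fmaj(\tau)}$ on the right.
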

\begin{proof}
Let $s=(k, k, \ldots , k)$ and set $u=q^k $ in (\ref{qve}). Then 
\begin{eqnarray*}
 \prod_{i=1}^p   \left(   u^n + [n]_{u}[s(i)]_q       \right) &=&  \left(   q^{nk} + [n]_{q^k}[k]_q       \right)^p \\
 &=& \left( q^{nk}    + \frac{q^{kn}-1}{q^k-1} \frac{q^k-1}{q-1}          \right)^p\\
 &=& [nk+1]_q^p.
\end{eqnarray*}
The right hand side follows since $s(i)=k$ for all $1\leq i \leq p$, and thus we sum over all $\tau \in \ZZ_k \wr \sym_p.$ 
\end{proof}

\begin{remark}
The definition of $\fmaj$ above differs from the definition of the flag major index $\fmaj_r$ in \cite{ChMa}.
By the change in variables $q \rightarrow q^{-1}$ and $t \rightarrow tq^{kp}$ and by noting that
$[kn+1]_q^p t^n $ is invariant under this change of variables we find that the two flag major indices have the same distribution. 
\end{remark}

\begin{corollary} 
\label{kn}
For positive integers $p$ and $k$, 
$$
\sum_{n\geq 0}  \prod_{i=1}^p(1+n[k]_{q_i}) t^n
 =  \frac{\displaystyle   \sum_{\tau \in \ZZ_k \wr \sym_p} q_1^{r_1}q_2^{r_2}\cdots q_p^{r_p}  t^{\des_s (\tau)}   }  {(1-t)^{p+1}}.  
$$

\end{corollary}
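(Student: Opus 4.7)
The plan is to derive the identity directly from Corollary~\ref{6} by specializing the $x$- and $y$-variables, mirroring the strategy already used in the proof of Corollary~\ref{kn1}. Specifically, I would take $P$ to be the anti-chain on $[p]$, set $s(i)=k$ for all $i$, and substitute $x_i=1$ and $y_i=q_i$ throughout \eqref{6}.

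First, I examine the left-hand side of \eqref{6}. With $x_i=1$ one has $[n]_{x_i}=n$, so the generic factor $x_i^n+[n]_{x_i}[s(i)]_{y_i}$ collapses to $1+n[k]_{q_i}$, producing exactly $\sum_{n\ge 0}\prod_{i=1}^p(1+n[k]_{q_i})t^n$, which is the left-hand side of the claimed formula.

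Second, I evaluate the right-hand side of \eqref{6} under the same substitution. Every monomial in the $x$'s becomes $1$, so $\prod_{i\in D(\tau)} x_{\pi_{i+1}}\cdots x_{\pi_p}=1$ and $\prod_{i\in[p]}(1-x_{\pi_i}\cdots x_{\pi_p}t)=(1-t)^p$, while $\mathbf{y}^r=q_1^{r(1)}\cdots q_p^{r(p)}$. Combined with the remaining factor $t^{|D(\tau)|}/(1-t)$, the right-hand side reduces to
$$\frac{1}{(1-t)^{p+1}}\sum_{\tau=(\pi,r)\in\LL(P,s)} q_1^{r(1)}\cdots q_p^{r(p)}\, t^{|D(\tau)|}.$$
Invoking Corollary~\ref{eul} to rewrite $|D(\tau)|=\des_s(\tau)$, together with the identification of $\LL(P,s)$ with $\ZZ_k\wr\sym_p$ (as noted just before Corollary~\ref{kn1}), gives precisely the right-hand side of the claimed identity.

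Since the whole argument is a formal specialization of an identity already established, there is no essential obstacle. The only point of care is verifying that the substitution $x_i=1$ is legitimate in the formal power series in $t$; this holds because each denominator $1-x_{\pi_i}\cdots x_{\pi_p}t$ specializes to $1-t$, which remains invertible as a power series in $t$, so the specialization commutes with the expansion and no pole is created.
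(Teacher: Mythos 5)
Your proposal is correct and is essentially identical to the paper's proof, which likewise specializes the identity of Corollary~\ref{6} to the anti-chain with $s=(k,\ldots,k)$ by setting $x_i=1$ (with the $y_i$ playing the role of the $q_i$). The additional checks you carry out (collapse of the numerator and denominator, legitimacy of the substitution as formal power series in $t$) are routine and consistent with the paper.
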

\begin{proof}
Let $s=(k, k, \ldots , k)$ and set $x_i=1 $ for all $1\leq i \leq p$ in the equation displayed in Corollary \ref{6}. 
\end{proof}

\begin{remark}
Note that when $q_i\geq 0$ for all $1 \leq i \leq p$, the polynomial $$n \mapsto \prod_{i=1}^p(1+n[k]_{q_i})$$ has all its zeros in the interval $[-1,0]$. By an application of 
\cite[Theorem 0.1]{Wa2} it follows that the polynomial 
$$
  \sum_{\tau \in \ZZ_k \wr \sym_p} q_1^{r_1}q_2^{r_2}\cdots q_p^{r_p}   t^{\des_s(\tau)}  
$$
is real-rooted in $t$. This generalizes  \cite[Theorem 6.4]{B2004}, where the case $k=2$ was obtained.
\end{remark}

\end{document}